\newtheorem{mydef}{Definition}[section]
\newtheorem{myrem}{Remark}[section]
\newtheorem{mytheo}{Theorem}[section]
\newtheorem{mylem}{Lemma}[section]
\newtheorem{mycor}{Corollary}[section]
\newtheorem*{mylem3.1}{Lemma 3.1*}
\theoremstyle{remark}
\numberwithin{equation}{section}
\begin{document}

\title{The Restricted Isometry Property of Block Diagonal Matrices Generated by $\varphi$-Sub-Gaussian Variables}

\pagestyle{fancy}

\fancyhf {} 


\fancyhead[CO]{\footnotesize  SUPREMA OF CHAOSES AND THE R.I.P.}

\fancyhead [CE]{\footnotesize Y. CHEN, G. DAI, K.DING}

\fancyhead [LE]{\thepage}
\fancyhead [RO]{\thepage}
\renewcommand{\headrulewidth}{0mm}

\author{Yiming Chen}
\address{Shandong University,  Jinan,  250100, China.}
\email{chenyiming960212@mail.sdu.edu.cn}

\author{Guozheng Dai$^*$}
\address{Zhejiang University, Hangzhou, 310058,  China.}
\email{11935022@zju.edu.cn}

\author{Kaiti Ding}
\address{University of California, Santa Barbara, America.}
\email{kaitiding@gmail.com}



\subjclass[2020]{60B20, 41A46, 46B20}

\date{}

\keywords{ $\varphi$-sub-Gaussian variables, restricted isometry property, uniform Hanson-Wright deviation inequalities}

\begin{abstract}
In this paper, we prove the restricted isometry property of block diagonal random matrices with elements from $\varphi$-sub-Gaussian variables, which extends the previously known results for the sub-Gaussian case.
A crucial ingredient of our proof is an improved uniform Hanson-Wright deviation inequality, which should be of independent interest.
\end{abstract}

\maketitle

\section{Introduction }
Compressed sensing is a method for reconstructing sparse vectors from incomplete data, as described in \cite{Tao_recovery} and \cite{CJTao_recovery}. Specifically, any \(d\)-dimensional vector \(\mathbf{x}\) with at most \(s\) non-zero elements can be accurately recovered from \(O(s \log (d / s))\) non-adaptive measurements, which satisfy \(\mathbf{y} = \mathbf{B x}\), where \(\mathbf{B} \in \mathbb{C}^{m \times d}\) and \(m \ll d\). This is under the assumption that the measurement matrix \(\mathbf{B}\) satisfies certain structural conditions, and efficient algorithms are used for the reconstruction. This method has been widely applied in the fields of signal and image processing, where the commonly used measurement matrices are random matrices, simulating the process of acquiring linear data.


The Restricted Isometry Property (RIP) \cite{Tao_Rrecovery,Tao_Crecovery}  emerges as a standard analytical instrument to examine the efficacy of a measurement matrix in capturing information pertaining to sparse signals.  It is the feasibility support for the analysis of various signal reconstruction algorithms including $L_1$ minimization, greedy pursuit, and other types of iterative algorithms. Many types of random matrices, including Gaussian and Rademacher matrices, obey the RIP with optimal scaling behavior. These random matrices are referred as unstructured random matrices. Meanwhile, considering that in most engineering applications, most of the structural characteristics of the measurement system are predetermined by the current application, there are also many works that consider the RIP of structured random matrices, such as Partial Random Circulant Matrices and Time-Frequency Structured Random Matrices. 

In this paper, we conduct an exploration of a special class of structured random matrices, which lie at the intersection between random and highly organized measurement sets. Specifically, we focus on block-diagonal measurement matrices, where the blocks are independently generated dense generalized sub-Gaussian random matrices. These measurement models have wide applications in various fields, such as Distributed Compressed Sensing (DCS) and the Multiple Measurement Vector (MMV) framework. In the MMV framework, it is assumed that the low-complexity structure of the signal remains consistent over time, thereby enabling the capture of multiple independent observations of the signal.

The core technology to address this issue lies in representing the RIP of the random matrix as an upper bound of a chaos process (uniform Hanson-Wright type inequality), which is a classical approach for showing the RIP of structured random matrices.  Based on the Talagrand's generic chaining, Tropp et al. derived the RIP for Time-Frequency Structured Random Matrices and Partial Random Circulant Matrices. Krahmer et al. \cite{Rauhut_CPAM} show an improved uniform Hanson-Wright type deviation inequality  through a more refined chaining method, leading to better RIP constants for Time-Frequency Structured Random Matrices and Partial Random Circulant Matrices. Similarly, the properties of the RIP for block diagonal random matrices have developed rapidly. Eftekhari et al. \cite{Eftekhari_yap} obtained the RIP properties of block random matrices where each block on the main diagonal is itself a sub-Gaussian random matrix, and they have made in-depth extensions to certain sparse matrices and random convolution systems. Koep et al. \cite{KOEP2022333} established a group-sparse variant of the classical RIP for block diagonal sensing matrices that act on group-sparse vectors, and provided conditions under which sub-Gaussian block diagonal random matrices satisfy this group-RIP with high probability.

It can be noted that the result of the RIP for block-diagonal measurement matrices mainly focus on the case where the elements of the matrix are sub-Gaussian random variables, while there is relatively less work on random matrices with non-Gaussian elements. In this work,  we extend this direction by considering the elements of block random matrices to be a class of  $\varphi$-sub-Gaussian random variables, which will introduced in the following chapters. The key results stem from our derivation of a novel uniform Hanson-Wright type deviation inequality.

 \subsection{Organization of the paper}
  Section 2 will introduce some notations and auxiliary lemmas, mainly encompassing the moment bounds for decoupled chaos variables and the generic chaining. In Section 3, we will prove our main results. Firstly, we will present an improved uniform Hanson-Wright deviation inequality. Then, we will provide a proof for the RIP of block diagonal matrices. 


\section{Preliminaries}

\subsection{Notations}
We first introduce some notations. Denote $\left(\mathrm{E}\left|\xi_1\right|^p\right)^{1 / p}$ by $\left\|\xi_1\right\|_{L_p}$ for a random variable $\xi_1$. Denote by $\Vert \cdot\Vert_{F}$ the Frobenius matrix norm and   $\|\cdot\|_{l_\alpha \rightarrow l_\beta}$ the operator matrix norm from $l_{\alpha}$ space to $l_{\beta}$ space ($\alpha, \beta\ge 1$). Let $f=(f(x), x \in \mathbb{R})$ be a real-valued function. The function $f^*=\left(f^*(x), x \in \mathbb{R}\right)$ defined by the formula
		$
		f^*(x)=\sup_{y \in \mathbb{R}}(x y-f(y)),
		$
		is called the convex conjugate of $f$, also known as the  Young-Fenchel transform of the function $f$. Given a positive number $a$, we let the symbol $\lfloor a\rfloor$ represent the floor function of $a$.
		
		A random variable $\xi$ is $\alpha$-subexponential if its tail probability satisfies for $t\ge 0$,
	 \begin{align}
		\textsf{P}\{\vert \eta_{1}\vert\ge Kt  \}\le ce^{-ct^{\alpha}},\nonumber
	\end{align}
	where $K$ is a parameter and $c$ is a universal constant. We often refer to $\eta_{1}$ as a sub-Gaussian variable when $\alpha =2$. The $\alpha$-subexponential norm of $\eta_{1}$ is defined as follows:
	\begin{align}
		\Vert \eta_{1}\Vert_{\Psi_{\alpha}}:=\inf\{ t>0:\textsf{E}\exp(\frac{\vert \eta_{1}\vert^{\alpha}}{t^{\alpha}})\le 2\}.\nonumber
	\end{align}
We denote $\xi\sim \mathcal{W}_{s}(\alpha)$ if $\xi$ is  a symmetric Weibull variable with the scale parameter $1$ and the shape parameter $\alpha$. In particular, $-\log \textsf{P}\{\vert\xi\vert>x  \}=x^{\alpha}, x\ge 0$.
\subsection{$\varphi$-sub-Gaussian variables}
In this section, we introduce the concept of the generalized sub-Gaussian random variables whcih is introduced by Buldygin and Kozachenko \cite{Buldyginbook} called $\varphi$-sub-Gaussian distributions. As a generalization of sub-Gaussian distribution, $\varphi$-sub-Gaussian distribution is widely used in many application scenarios. Such as random Fourier series \cite{Antoninijmaa}, sampling theorem \cite{Kozachenkoieee} and Whittaker-Kotelnikov-Shannon (WKS) approximation \cite{Kozachenkojmaa}. Now, we present its relevant definitions and fundamental properties.
\begin{mydef} A function $\varphi(x), x \in \mathbb{R}$, is called an Orlicz $N$-function if it is continuous, even, convex with $\varphi(0)=0$ and monotonically increasing in the set $x>0$, satisfying\\
(1) $\varphi(x)/x\rightarrow 0$, when $x\rightarrow0$; \\
(2) $\varphi(x) / x \rightarrow \infty$, when $x \rightarrow \infty$.

	\end{mydef}
	\begin{mydef} 
		For an Orlicz $N$-function $\varphi$ satisfying $Q$-condition, a zero-mean random variable $\xi$ obeys $\varphi$-sub-Gaussian distribution, if there exists a constant $a\geq 0$ such that the inequality 
		\begin{equation}\label{defin}
			\textsf{E} \exp (\lambda \xi) \leq \exp \left(\varphi\left(\lambda a \right)\right),
		\end{equation}
		holds for all $\lambda \in \mathbb{R}$.
	\end{mydef}
A random variable that obeys $\varphi$-sub-Gaussian distribution is also called a $\varphi$-sub-Gaussian random variable. Let $Sub_{\varphi}(\Omega)$ denotes the space of $\varphi$-sub-Gaussian random variables. For a metric space $T$, a stochastic  process, $(X_{t})_{t\in T}$, is called $\varphi$-sub-Gaussian process if $X_t \in Sub_{\varphi}(\Omega)$ for all $t \in T$. 
	
	\begin{mylem}\label{l2}
		For any random variable $\xi \in \operatorname{Sub}_{\varphi}(\Omega)$, 
		\begin{equation}\label{2p}
			\textsf{E} \exp \{\lambda \xi\} \leq \exp \left\{\varphi\left(\lambda \tau_{\varphi}(\xi)\right)\right\}, \quad \lambda \in \mathbb{R}
		\end{equation}
		holds, where $$
		\tau_{\varphi}(\xi):=\sup _{\lambda \neq 0} \frac{\varphi^{(-1)}(\log \textsf{E} \exp \{\lambda \xi\})}{|\lambda|},
		$$
		and  $\varphi^{(-1)}(\cdot)$ denotes the inverse function of $\varphi(\cdot)$.

	\end{mylem}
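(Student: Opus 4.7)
The statement is essentially a tightness claim: $\tau_{\varphi}(\xi)$, defined as a supremum, is itself an admissible constant $a$ in the definition of $\varphi$-sub-Gaussianity. My plan is therefore to derive the inequality directly from the definition of $\tau_{\varphi}(\xi)$, with care taken on the domain where $\varphi^{(-1)}$ is applied.

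First I would dispose of the trivial case $\lambda = 0$, where both sides of \eqref{2p} equal $1$. For $\lambda \neq 0$, I would start from the defining supremum
\[
\tau_{\varphi}(\xi)=\sup_{\lambda\neq 0}\frac{\varphi^{(-1)}(\log\textsf{E}\exp\{\lambda\xi\})}{|\lambda|},
\]
which immediately yields
\[
\varphi^{(-1)}\bigl(\log\textsf{E}\exp\{\lambda\xi\}\bigr)\le |\lambda|\,\tau_{\varphi}(\xi).
\]
Before inverting via $\varphi$, I need to ensure that the argument of $\varphi^{(-1)}$ lies in the range where $\varphi^{(-1)}$ is defined and increasing, i.e.\ in $[0,\infty)$. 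This follows from Jensen's inequality combined with the zero-mean assumption on $\xi$: $\textsf{E}\exp\{\lambda\xi\}\ge \exp\{\lambda\,\textsf{E}\xi\}=1$, so the logarithm is nonnegative.

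Next I would apply $\varphi$ to both sides. Since $\varphi$ is monotonically increasing on $[0,\infty)$ (Definition 2.1) and $\varphi^{(-1)}$ is its inverse there, this gives
\[
\log\textsf{E}\exp\{\lambda\xi\}\le \varphi\bigl(|\lambda|\,\tau_{\varphi}(\xi)\bigr).
\]
Evenness of $\varphi$ (again from Definition 2.1) allows me to replace $|\lambda|\tau_{\varphi}(\xi)$ with $\lambda\tau_{\varphi}(\xi)$ inside $\varphi$. Exponentiating yields \eqref{2p}.

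The only point that requires a brief justification is that the supremum $\tau_{\varphi}(\xi)$ is finite, so that the inequality is nonvacuous; I would remark that this follows from $\xi\in Sub_{\varphi}(\Omega)$: the existence of some admissible $a\ge 0$ in \eqref{defin} gives $\varphi^{(-1)}(\log\textsf{E}\exp\{\lambda\xi\})\le |\lambda| a$ for every $\lambda\neq 0$, hence $\tau_{\varphi}(\xi)\le a<\infty$. There is no real obstacle here: the lemma is almost tautological once one unpacks the supremum, and the only thing to watch for is the nonnegativity of $\log\textsf{E}\exp\{\lambda\xi\}$, which is what makes the inversion of $\varphi$ legitimate.
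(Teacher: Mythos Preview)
Your argument is correct and complete: the lemma is indeed essentially tautological once the supremum defining $\tau_{\varphi}(\xi)$ is unpacked, and you have correctly identified and handled the one subtle point, namely that Jensen's inequality together with $\textsf{E}\xi=0$ guarantees $\log\textsf{E}\exp\{\lambda\xi\}\ge 0$, so that the inversion of $\varphi$ is legitimate on $[0,\infty)$.

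Note that the paper itself does not supply a proof of this lemma; it is quoted as a standard fact from Buldygin and Kozachenko's monograph \cite{Buldyginbook} (the same reference is given explicitly for the subsequent Lemma~2.2). Your self-contained derivation is exactly the argument one finds there, so there is no meaningful divergence to discuss.
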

	
	\begin{mylem}
		$Sub_{\varphi}(\Omega)$  is a Banach space with respect to the norm 
		\begin{equation}
			\tau_{\varphi}(X)=\inf \{a \geq 0: \textsf{E} \exp \{\lambda X\} \leq \exp \{\varphi(a \lambda)\}, \,\,\text{for all}\,\,\lambda \in \mathbb{R}\}.
		\end{equation}
		The norm above is equivalent to the one in Lemma \ref{l2}, we recommend referring to \cite{Buldyginbook} for proof, with the norm above, for any $u>0$, the  $\varphi$-sub-Gaussian process $(X_{t})_{t\in T}$ satisfies the following incremental inequality:
		\begin{equation}\label{2.1}
			\textsf{P}\left(|X_t-X_s| \geqslant u \tau_\varphi(X_t- X_s)\right) \leqslant 2 \exp \left\{-\varphi^*(u)\right\},
		\end{equation}
		where $t,s\in T$.
		
	\end{mylem}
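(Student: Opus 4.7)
The statement is the standard exponential tail bound one gets from a moment generating function control via a Chernoff argument, so the plan is to follow that recipe and make the convex conjugate appear at the end of the optimization step.

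First I would observe that, since $Sub_\varphi(\Omega)$ is stated to be a Banach space, the increment $Y := X_t - X_s$ lies in $Sub_\varphi(\Omega)$ with $\tau_\varphi$-norm equal to $\sigma := \tau_\varphi(X_t - X_s)$. By the very definition of $\tau_\varphi$ (or equivalently by Lemma 2.2),
\begin{equation*}
\textsf{E} \exp(\lambda Y) \le \exp(\varphi(\lambda \sigma)) \qquad \text{for all } \lambda \in \mathbb{R}.
\end{equation*}

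Next I would run a Chernoff bound on the upper tail. For any $\lambda > 0$ and $u > 0$, Markov's inequality gives
\begin{equation*}
\textsf{P}(Y \ge u\sigma) \le e^{-\lambda u \sigma}\, \textsf{E}\exp(\lambda Y) \le \exp(-\lambda u \sigma + \varphi(\lambda \sigma)).
\end{equation*}
The change of variable $\mu = \lambda \sigma$ turns the right-hand side into $\exp(-(\mu u - \varphi(\mu)))$, and optimizing over $\mu > 0$ produces $\exp(-\sup_{\mu > 0}(\mu u - \varphi(\mu)))$. Because $\varphi$ is even with $\varphi(0)=0$, the supremum over $\mu > 0$ coincides with the supremum over $\mu \in \mathbb{R}$ for $u > 0$, so this is exactly $\exp(-\varphi^*(u))$.

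For the lower tail I would apply the same argument to $-Y$, which also lies in $Sub_\varphi(\Omega)$ with the same $\tau_\varphi$-norm (using that $\varphi$ is even, so the bound $\textsf{E}\exp(\lambda(-Y)) \le \exp(\varphi(\lambda \sigma))$ holds as well). This yields $\textsf{P}(Y \le -u\sigma) \le \exp(-\varphi^*(u))$, and a union bound combines the two tails into the factor $2$ in the conclusion.

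There is no real obstacle here beyond bookkeeping; the only point to handle carefully is the reduction of $\sup_{\mu>0}$ to $\sup_{\mu\in\mathbb{R}}$ to identify the Young--Fenchel transform, which is immediate from the evenness of $\varphi$ and the positivity of $u$.
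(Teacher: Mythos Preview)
Your Chernoff argument is correct and is exactly the standard derivation of the incremental tail bound from the moment generating function control; the reduction of the supremum to $\mu\in\mathbb{R}$ via the evenness of $\varphi$ is handled properly. Note, however, that the paper does not supply its own proof of this lemma at all: it simply records the statement and refers the reader to Buldygin--Kozachenko \cite{Buldyginbook}, so there is nothing to compare beyond observing that your argument is the textbook one that reference would contain.
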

	\begin{myrem}
		Centered Gaussian  process  $(X_{t})_{t\in T}$ is an important example of  $\varphi$-sub-Gaussian process,  where $\varphi(x)=x^2 / 2$ and $\tau_{\varphi}(X_t)=\left(\textsf{E}|X_t|^2\right)^{1 / 2}$.
	\end{myrem}

\begin{mydef}
We say that a function $\varphi(\cdot)$ satisfies the $\Delta_2$-condition if it meets that for $x> 1$, 
$$
\varphi_i(2 x) \leq 2 \varphi_i(x).
$$
\end{mydef}

\begin{mydef} A random vector $X$ is called a $\varphi$-sub-Gaussian random vector if all one-dimensional marginals of $X$, i.e., the random variables $\langle X, x\rangle$ for any $x \in \mathbb{R}^n$, are $\varphi$-sub-Gaussian. The $\varphi$-sub-Gaussian norm of $X$ is defined as
$$
\tau_{\varphi}(X):=\sup _{x \in \mathrm{S}^{n-1}} \tau_{\varphi}(\langle X, x\rangle),
$$
where $S^{n-1}$ denotes the Euclidean unit sphere in $\mathbb{R}^n$.
\end{mydef}

\subsection{The Generic Chaining}
In this section, we will introduce the basic definitions and properties of the classical generic chaining method, pioneered and refined by Talagrand in his work \cite{Talagrand_chaining_book}.

For a metric space $(T, d)$, we call a sequence of subsets $\left\{T_n: n \geq 0\right\}$ of $T$ an admissible sequence if for every $n \geq 1,\left|T_n\right| \leq 2^{2^n}$ and $\left|T_0\right|=1$. For any $0<\alpha<\infty$, the Talagrand's $\gamma_\alpha$-functional of $(T, d)$ is defined as follows
$$
\gamma_\alpha(T, d)=\inf \sup _{t \in T} \sum_{r \geq 0} 2^{n / \alpha} d\left(t, T_n\right),
$$
where the infimum is taken concerning all admissible sequences of $T$. 

Given a set $T$ and an admissible sequence $(\mathcal{T}_{n})_{n\ge 0}$, denote  the unique element of $\mathcal{T}_{n}$ which contains $t$ by $T_{n}(t)$ and  the diameter of the set $T$ by $\Delta_{d}(T)$. Let
\begin{align}
	\gamma_{\alpha}^{\prime}(T, d)=\inf_{\mathcal{T}}\sup_{t\in T}\sum_{n\ge 0}2^{n/\alpha}\Delta_{d}(T_{n}(t)),\nonumber
\end{align}
where the infimum is taken over all admissible partitions $\mathcal{T}$ of $T$. Talagrand \cite{Talagrand_annals_prob} showed $\gamma_{\alpha}(T, d)\le \gamma^{\prime}_{\alpha}(T, d)\le C(\alpha)\gamma_{\alpha}(T, d)$.

For a fixed radius $u>0$, we denote the covering number of $T$ by $N(T, d, u)$, that is, the minimum number of balls with radius $u$ required to cover the set $T$. One can bound the $\gamma_\alpha$-functionals with such covering numbers. In particular, we have
$$
\gamma_\alpha(T, d) \lesssim \alpha \int_0^{\infty}(\log N(T, d, u))^{1 / \alpha} d u.
$$

The bound of the above inequality is the well-known Dudley integral. The details of the inequality above can be be traced back to in \cite{Talagrand_chaining_book} for the case of $\alpha=2$, one can also find in \cite{Dirksen_EJP} for the case of $\alpha >0$. In this work, we focus on the $\varphi$-sub-Gaussian process, whose Talagrand-type $\gamma$ functional have been introduced and well studied by Chen et al.\cite{chen_li_liu_wang}. 

\begin{mydef}
	For an Orlicz $N$-function $\varphi$ satisfying $Q-$condition, then for $1 \leq p<\infty$ distribution-dependent Talagrand-type $\gamma$-functional 
	is defined by 
	\begin{eqnarray}\label{gg1}
		\gamma_{\varphi,p}(T, d)=\inf\sup _{t \in T} \sum_{n\geq k}^{\infty} \varphi^{*(-1)}(2^{n}) \Delta \left(A_n(t)\right),
	\end{eqnarray}	
	where the infimum is taken over all admissible sequences and $k:=\left\lfloor\frac{\log (p)}{\log (2)} \right\rfloor$, $A_n(t)$ is the unique element of $\mathcal{T}_n$ which contains $t$, $\Delta$ is the diameter of the $A_n(t)$ with respect to the norm $\|\cdot\|_{\tau_{\varphi}}$.
\end{mydef}

The above functional is the result for the p-th moment, and when $\varphi(x)=\frac{x^2}{2}$, we denote $\gamma_{\varphi,p}(T, d)$ by $\gamma_{2,p}(T, d)$. 

\subsection{Order 2 chaos variables}

In this subsection, we introduce some properties of the order 2 chaos variable $\sum_{i, j}a_{ij}\xi_{i}\xi_{j}$. Here $(a_{ij})$ is a fixed matrix and $\{\xi_{i}\}$ is a squence of independent random variables. The first is the following decoupling inequality.

\begin{mylem}[Proposition 3.1 in \cite{Dai_Su_Wang_U}]\label{Lem_decoupling}
	Let $F$ be a convex function satisfying $F(x)=F(-x), \forall x\in \mathbb{R}$.  Assume $\xi=(\xi_{1}, \cdots, \xi_{n})^\top$ and $ \eta=(\eta_{1}, \cdots, \eta_{n})^\top$ are random vectors with independent centered entries. Let for any $t>0$ and $i\ge 1$, the independent random variables $\xi_{i}, \eta_{i}$ satisfy for  some $c\ge 1$
	\begin{align}
		\textsf{P}\{ \xi_{i}^{2}\ge t \}\le c\textsf{P}\{ c\vert \eta_{i}\tilde{\eta}_{i}\vert\ge t \},\nonumber
	\end{align}
	where $\tilde{\eta}_{i}$ is an independent copy of $\eta_{i}$. 
	Then, there exists a constant $C$ depending only on $c$ such that
	\begin{align}
		\textsf{E}\sup_{A\in \mathcal{A}}F\big(\xi^\top A\xi-\textsf{E}\xi^\top A\xi\big)\le \textsf{E}\sup_{A\in \mathcal{A}}F\big( C\eta^\top A\tilde{\eta}\big)\nonumber
	\end{align}
	where  $\mathcal{A}$ is a set of $n\times n$ fixed matrices.
\end{mylem}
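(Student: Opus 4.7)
My plan is to prove the statement in two phases. First, a classical decoupling step will reduce the centered symmetric chaos $\xi^\top A\xi-\textsf{E}\xi^\top A\xi$ to a fully decoupled bilinear form $\xi^\top A\xi'$, with $\xi'$ an independent copy of $\xi$; second, the tail-domination hypothesis will be used to swap $(\xi,\xi')$ for $(\eta,\tilde\eta)$ at the cost of a constant depending only on $c$.

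For the decoupling phase I would split the chaos into its off-diagonal part $\sum_{i\ne j}a_{ij}\xi_i\xi_j$ and centered-diagonal part $\sum_i a_{ii}(\xi_i^2-\textsf{E}\xi_i^2)$. The off-diagonal is handled by the standard Bourgain--Tzafriri / de la Pe\~na device: introduce i.i.d.\ Bernoulli$(1/2)$ selectors $\delta_i$, use the identity $4\,\textsf{E}_\delta[\delta_i(1-\delta_j)]=1$ for $i\ne j$, apply Jensen (convexity and evenness of $F$) to pull $\textsf{E}_\delta$ inside $F$ and inside $\sup_{A\in\mathcal{A}}$, and recognise the conditional sum, given $\delta$, as a bilinear form in two independent sub-vectors of $\xi$, which extends by exchangeability to $\xi^\top A\xi'$ up to a universal factor. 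The diagonal piece is treated via the Jensen identity $\xi_i^2-\textsf{E}\xi_i^2=\textsf{E}_{\xi'_i}[\xi_i^2-(\xi'_i)^2]$, the factorisation $\xi_i^2-(\xi'_i)^2=(\xi_i-\xi'_i)(\xi_i+\xi'_i)$, and Rademacher symmetrisation of the exchangeable pair $(\xi_i,\xi'_i)$. Combining the two pieces yields
\begin{align*}
\textsf{E}\sup_{A\in\mathcal{A}}F\big(\xi^\top A\xi - \textsf{E}\xi^\top A\xi\big) \le \textsf{E}\sup_{A\in\mathcal{A}}F\big(C_1\,\xi^\top A\xi'\big)
\end{align*}
for a universal constant $C_1$.

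For the replacement phase, integrating the tail hypothesis against $p\,t^{p-1}\,dt$ gives the moment comparison $\|\xi_i\|_{L_{2p}}\le c\,\|\eta_i\|_{L_p}$ for every $p\ge 1$. Since $\xi_i,\xi'_j$ are centered, Jensen plus Rademacher symmetrisation let me pass from $\xi^\top A\xi'$ to $\sum_{i,j}a_{ij}\epsilon_i\epsilon'_j|\xi_i||\xi'_j|$ up to another universal constant. Conditioning on $(|\xi_i|,|\xi'_j|)$ leaves a symmetric Rademacher chaos; a Skorokhod-type coupling built from the coordinatewise domination $\textsf{P}\{\xi_i^2\ge t\}\le c\,\textsf{P}\{c|\eta_i\tilde\eta_i|\ge t\}$ realises $|\xi_i^*|^2\le c^2|\eta_i^*\tilde\eta_i^*|$ almost surely on an enlarged probability space with the original marginals preserved. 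The contraction principle for conditionally symmetric chaos, applied uniformly in $A\in\mathcal{A}$, then transfers the supremum bound to $\textsf{E}\sup_{A\in\mathcal{A}}F(C\,\eta^\top A\tilde\eta)$.

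The hard step is the replacement phase. The hypothesis binds $\xi_i^2$ to the product $|\eta_i\tilde\eta_i|$ rather than to $|\eta_i|$ alone, so there is no pointwise comparison of the form $|\xi_i|\lesssim|\eta_i|$ available, and off-diagonal terms $\xi_i\xi'_j$ with $i\ne j$ cannot be substituted by $\eta_i\tilde\eta_j$ through a naive one-dimensional coupling. The delicate point is to carry out the moment/contraction argument so that the final constant depends only on $c$, uniformly in the set $\mathcal{A}$ and in the dimension $n$; this is where the convex, even nature of $F$ and the classical chaos tools (decoupling, symmetrisation, contraction) combine most tightly.
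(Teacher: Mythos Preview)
The paper does not give its own proof of this lemma; it is quoted verbatim as Proposition~3.1 of \cite{Dai_Su_Wang_U} and used as a black box. So there is no in-paper argument to benchmark against, and your proposal has to stand on its own.

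Your decoupling phase is standard and fine. The replacement phase, however, contains a real gap, and in fact you flag it yourself without closing it. Two concrete problems. First, the Skorokhod step as written does not produce an a.s.\ coupling: from $\textsf{P}\{\xi_i^2\ge t\}\le c\,\textsf{P}\{c|\eta_i\tilde\eta_i|\ge t\}$ with $c>1$ in front of the probability you cannot manufacture $|\xi_i^*|^2\le c^2|\eta_i^*\tilde\eta_i^*|$ almost surely; the quantile construction only works when the outer constant is $1$. Second, and more seriously, even granting such a coupling your contraction step would land on
\[
\sum_{i,j} a_{ij}\,\epsilon_i\epsilon'_j\,|\eta_i\tilde\eta_i|^{1/2}\,|\eta'_j\tilde\eta'_j|^{1/2},
\]
a form involving four independent copies of $\eta$, not the target $\eta^\top A\tilde\eta=\sum_{i,j}a_{ij}\eta_i\tilde\eta_j$. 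Your last paragraph essentially concedes this without explaining how to get from one to the other.

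The fix you are missing is that a one-dimensional tail comparison of $|\xi_i|$ with $|\eta_i|$ \emph{is} available, contrary to what you assert. From the hypothesis,
\[
\textsf{P}\{|\xi_i|\ge t\}=\textsf{P}\{\xi_i^2\ge t^2\}\le c\,\textsf{P}\{|\eta_i||\tilde\eta_i|\ge t^2/c\}\le 2c\,\textsf{P}\{|\eta_i|\ge t/\sqrt{c}\},
\]
using that $\{|\eta_i||\tilde\eta_i|\ge s\}\subset\{|\eta_i|\ge\sqrt{s}\}\cup\{|\tilde\eta_i|\ge\sqrt{s}\}$. This gives $\textsf{P}\{|\xi_i|\ge t\}\le 2c\,\textsf{P}\{\sqrt{c}\,|\eta_i|\ge t\}$, so in the decoupled off-diagonal $\sum_{i\ne j}a_{ij}\xi_i\xi'_j$ you can replace $\xi_i$ by $\eta_i$ and $\xi'_j$ by $\tilde\eta_j$ separately, landing directly on $\eta^\top A\tilde\eta$ and avoiding the four-copy form altogether. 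The diagonal part $\sum_i a_{ii}(\xi_i^2-\textsf{E}\xi_i^2)$ is handled, as you say, by symmetrisation and the original product comparison, since here one genuinely wants $\xi_i^2\lesssim|\eta_i\tilde\eta_i|$. You still need to deal with the multiplicative constant $2c$ in front of the tail probability (e.g.\ via an auxiliary Bernoulli thinning plus Jensen, or by passing through moments rather than pointwise coupling), but that is a routine technicality once the right one-dimensional comparison is in hand.
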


Then, we present an upper bound for the $p$-th moment of some bilinear random variables. 

\begin{mylem}[Remark 2.1 in \cite{Dai_Su_Wang_U}]\label{Lem_bound_pth_decoupled}
	Let $\{\xi_{i}, i\le n \}$ be a sequence of i.i.d. variables with distribution $\mathcal{W}_{s}(\alpha)$, $0<\alpha\le 2$. Let $A=(a_{ij})_{n\times n}$ be a fixed symmetric matrix and $\{\tilde{\xi}_{i}, i\le n \}$ be independent copies of $\{\xi_{i}, i\le n\}$. Then, we have for $p\ge 2$
	\begin{align}
		\big\Vert\sum_{i, j}a_{ij}\xi_{i}\tilde{\xi}_{j} \big\Vert_{L_{p}}\lesssim_{\alpha} p^{1/2}\Vert A\Vert_{F}+p^{2/\alpha}\Vert A\Vert_{l_{2}\to l_{2}}.\nonumber
	\end{align}
\end{mylem}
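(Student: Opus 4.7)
My plan is to reduce the bilinear sum to a linear form by conditioning, control the linear form with a Bernstein-type moment inequality for Weibull variables, and then estimate the $L_{p}$-moments of the Euclidean and infinity norms of $A\tilde{\xi}$.

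Conditionally on $\tilde{\xi}$, the quantity $\sum_{i,j}a_{ij}\xi_{i}\tilde{\xi}_{j}=\sum_{i}\xi_{i}(A\tilde{\xi})_{i}$ is a linear combination of independent symmetric Weibull variables with deterministic coefficient vector $b=A\tilde{\xi}$. A standard moment inequality, a consequence of the $\alpha$-subexponential tail $\textsf{P}(|\xi_{i}|\ge t)\le 2e^{-t^{\alpha}}$, gives
\[
\Bigl\|\sum_{i}\xi_{i}b_{i}\Bigr\|_{L_{p}}\lesssim_{\alpha} p^{1/2}\|b\|_{2}+p^{1/\alpha}\|b\|_{\infty},\qquad p\ge 2.
\]
Applying this conditionally and then taking the outer $L_{p}$-norm in $\tilde{\xi}$ yields
\[
\Bigl\|\sum_{i,j}a_{ij}\xi_{i}\tilde{\xi}_{j}\Bigr\|_{L_{p}}\lesssim_{\alpha} p^{1/2}\bigl\|\|A\tilde{\xi}\|_{2}\bigr\|_{L_{p}}+p^{1/\alpha}\bigl\|\|A\tilde{\xi}\|_{\infty}\bigr\|_{L_{p}},
\]
so the task reduces to estimating these two functional moments of the Weibull vector $\tilde{\xi}$.

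For the Euclidean term I would target $\bigl\|\|A\tilde{\xi}\|_{2}\bigr\|_{L_{p}}\lesssim_{\alpha}\|A\|_{F}+p^{1/\alpha}\|A\|_{l_{2}\to l_{2}}$ via the identity $\|A\tilde{\xi}\|_{2}^{2}=\tilde{\xi}^{\top}(A^{\top}A)\tilde{\xi}$. The diagonal piece $\sum_{j}(A^{\top}A)_{jj}\tilde{\xi}_{j}^{2}$ is a sum of independent $(\alpha/2)$-subexponentials with mean proportional to $\|A\|_{F}^{2}$ and deviation controlled by a Bernstein inequality; the off-diagonal piece $\sum_{j\ne k}(A^{\top}A)_{jk}\tilde{\xi}_{j}\tilde{\xi}_{k}$ becomes, after decoupling via Lemma~\ref{Lem_decoupling}, another decoupled chaos but now for the matrix $A^{\top}A$ at parameter $p/2$, so the very estimate under proof can be fed back in. The inequalities $\|A^{\top}A\|_{F}\le\|A\|_{l_{2}\to l_{2}}\|A\|_{F}$ and $\|A^{\top}A\|_{l_{2}\to l_{2}}=\|A\|_{l_{2}\to l_{2}}^{2}$, together with an AM--GM step on the cross term $p^{1/2}\|A\|_{l_{2}\to l_{2}}\|A\|_{F}$, then close the recursion after taking a square root.

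For the infinity-norm term, applying the linear-form bound rowwise gives $\|(A\tilde{\xi})_{i}\|_{L_{q}}\lesssim_{\alpha} q^{1/2}\|a_{i}\|_{2}+q^{1/\alpha}\|a_{i}\|_{\infty}$ with $\max_{i}\|a_{i}\|_{2}\le\|A\|_{l_{2}\to l_{2}}$, and passing from the maximum to a sum (with exponent $q=\max(p,\log n)$ to absorb the $n^{1/p}$ factor) produces $\bigl\|\|A\tilde{\xi}\|_{\infty}\bigr\|_{L_{p}}\lesssim_{\alpha}(p^{1/2}+p^{1/\alpha})\|A\|_{l_{2}\to l_{2}}$. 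Assembling both contributions and using $p^{1/2+1/\alpha}\le p^{2/\alpha}$, which holds because $\alpha\le 2$, collapses everything into the announced bound $p^{1/2}\|A\|_{F}+p^{2/\alpha}\|A\|_{l_{2}\to l_{2}}$. The main obstacle is the recursive step for the Euclidean norm: the off-diagonal chaos on $A^{\top}A$ at halved exponent must feed back into the very inequality being proved, so one needs either an induction on $p$ (leveraging the dyadic decrease $p\to p/2$) or a fixed-point argument on the universal constants to close the loop rigorously; once this is in place, the remaining calculations are essentially bookkeeping.
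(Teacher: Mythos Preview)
The paper does not prove this lemma; it is quoted as Remark~2.1 of \cite{Dai_Su_Wang_U}, so there is no in-paper argument to compare against. I therefore assess your proposal on its own merits.

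There is a genuine gap at the $\ell_\infty$ step. Your asserted bound
\[
\bigl\|\,\|A\tilde\xi\|_\infty\bigr\|_{L_p}\lesssim_\alpha(p^{1/2}+p^{1/\alpha})\|A\|_{l_2\to l_2}
\]
is false in general: for $A=I_n$ one has $\|A\tilde\xi\|_\infty=\max_i|\tilde\xi_i|$ and $\bigl\|\max_i|\tilde\xi_i|\bigr\|_{L_2}\asymp(\log n)^{1/\alpha}$, whereas your right-hand side is $O_\alpha(1)$. The trick of passing to $q=\max(p,\log n)$ does give $n^{1/q}\le e$, but it leaves the prefactor $q^{1/2}+q^{1/\alpha}$, not $p^{1/2}+p^{1/\alpha}$; for $p<\log n$ a power of $\log n$ therefore survives. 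The cheap repair $\|A\tilde\xi\|_\infty\le\|A\tilde\xi\|_2$ does not close the gap either: plugging in your own Euclidean estimate $\bigl\|\|A\tilde\xi\|_2\bigr\|_{L_p}\lesssim\|A\|_F+p^{1/\alpha}\|A\|_{l_2\to l_2}$ and multiplying by the outer factor $p^{1/\alpha}$ produces a Frobenius contribution $p^{1/\alpha}\|A\|_F$, which strictly exceeds the target $p^{1/2}\|A\|_F$ for every $\alpha<2$ and $p>1$.

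The difficulty is structural. Once you freeze $\tilde\xi$ and apply the one-dimensional Bernstein inequality, the heavy-tailed piece $p^{1/\alpha}\|A\tilde\xi\|_\infty$ (or $\|A\tilde\xi\|_{\alpha^*}$ when $\alpha>1$) cannot be controlled dimension-free by $\|A\|_{l_2\to l_2}$ alone, and routing it through $\|A\tilde\xi\|_2$ costs exactly the factor $p^{1/\alpha-1/2}$ that separates your bound from the lemma's. The proofs in the literature avoid this by treating the two copies $\xi,\tilde\xi$ more symmetrically rather than fully conditioning on one side; alternatively, the tail form~\eqref{Eq_Hanson_Wright_alpha_subexponential} (due to Sambale~\cite{Sambale}) for the coupled chaos, combined with decoupling, integrates directly to the stated moment bound. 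Your recursive plan for the Euclidean term is reasonable, but the $\ell_\infty$ branch needs a genuinely different idea.
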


\subsection{Tails and Moments}
In this subsection, we shall present the following property about the relationship between the tail and the moment of random variables.

\begin{mylem}[Lemma 2.1 in \cite{Dai_Su_Wang_U}]\label{Lem_Moments_2}
	Assume that a random variable $\xi$ satisfies for $p\ge p_{0}$
	\begin{align}
		\Vert \xi\Vert_{L_p}\le \sum_{k=1}^{m}C_{k}p^{\beta_{k}}+C_{m+1},\nonumber
	\end{align}
	where $C_{1},\cdots, C_{m+1}> 0$ and $\beta_{1},\cdots, \beta_{m}>0$. Then we have for any $t>0$,
	\begin{align}
		\textsf{P}\big\{ \vert \xi\vert>e(mt+C_{m+1}) \big\}\le e^{p_{0}}\exp\Big(-\min\big\{\big(\frac{t}{C_{1}}\big)^{1/\beta_{1}},\cdots, \big(\frac{t}{C_{m}}\big)^{1/\beta_{m}}\big\}\Big)\nonumber
	\end{align}
	and
	\begin{align}
		\textsf{P}\Big\{\vert \xi\vert>e \big(\sum_{k=1}^{m}C_{k}t^{\beta_{k}}+C_{m+1}\big)  \Big\}\le e^{p_{0}}e^{-t}.\nonumber
	\end{align}
\end{mylem}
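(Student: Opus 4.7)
The plan is to derive both tail bounds from the moment bound via Markov's inequality in the form $\textsf{P}\{|\xi|>u\}\le (\Vert \xi\Vert_{L_p}/u)^p$ applied with an optimized choice of $p$. The restriction $p\ge p_0$ in the hypothesis will be handled by falling back on the trivial bound $\textsf{P}\{|\xi|>u\}\le 1$ in the complementary regime $p<p_0$; this is precisely what the prefactor $e^{p_0}$ is designed to absorb, since $e^{p_0}e^{-p}\ge 1$ whenever $p\le p_0$.

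For the second inequality I would simply take $p=t$. When $t\ge p_0$, the hypothesis yields $\Vert \xi\Vert_{L_t}\le \sum_{k=1}^{m}C_k t^{\beta_k}+C_{m+1}$, so choosing $u=e(\sum_{k=1}^{m}C_k t^{\beta_k}+C_{m+1})$ gives $\textsf{P}\{|\xi|>u\}\le (\Vert \xi\Vert_{L_t}/u)^t\le e^{-t}$. When $t<p_0$, the right-hand side $e^{p_0}e^{-t}$ already exceeds $1$ and the conclusion is vacuous, so the two cases combine to give the desired bound.

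For the first inequality, the key step is to choose $p$ so that every summand $C_k p^{\beta_k}$ is controlled by $t$. Setting $p=\min_{1\le k\le m}(t/C_k)^{1/\beta_k}$ achieves exactly this: the inequality $p\le (t/C_k)^{1/\beta_k}$ is equivalent to $C_k p^{\beta_k}\le t$, so $\Vert \xi\Vert_{L_p}\le mt+C_{m+1}$ whenever $p\ge p_0$. Taking $u=e(mt+C_{m+1})$, Markov's inequality gives $\textsf{P}\{|\xi|>u\}\le e^{-p}=\exp\bigl(-\min_k(t/C_k)^{1/\beta_k}\bigr)$, and the case $p<p_0$ is absorbed by the prefactor as before. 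I do not foresee any substantive obstacle; the only nuance is recognizing that the optimized exponent $\min_k(t/C_k)^{1/\beta_k}$ arises naturally from the constraint that every term in the moment bound be simultaneously dominated by $t$, which is the single algebraic observation driving the argument.
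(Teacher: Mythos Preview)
Your argument is correct and is precisely the standard route: apply Markov's inequality $\textsf{P}\{|\xi|>u\}\le(\Vert\xi\Vert_{L_p}/u)^p$, optimize in $p$ (setting $p=t$ for the second bound and $p=\min_k(t/C_k)^{1/\beta_k}$ for the first), and absorb the case $p<p_0$ with the trivial bound via the prefactor $e^{p_0}$. The paper does not supply its own proof of this lemma---it is quoted from \cite{Dai_Su_Wang_U}---so there is nothing to compare against beyond noting that your method is exactly the standard one used for such moment-to-tail conversions.
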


\section{Main Results and Proofs}

\subsection{Uniform Hanson-Wright type deviation inequalities}

Let $\xi_{1}, \cdots,\xi_{n}$ be a sequence of independent random variables with mean 0. Let $A=(a_{ij})_{n\times n}$ be a fixed matrix. The concentration properties of $\sum a_{ij}\xi_{i}\xi_{j}$ is a classic topic in probability. A well-known result is due to Hanson and Wright, claiming that if $\xi_{i}$ are sub-Gaussian (2-subexponential) variables satisfying $\max_{i} \Vert \xi_{i}\Vert_{\Psi_{2}}\le L$ and $A$ is symmetric, then for all $t\ge 0$ (the following modern version was in \cite{Rudelson_ecp})
\begin{align}
		\textsf{P}\big\{\vert \sum a_{ij}\xi_{i}\xi_{j}-\textsf{E}\sum a_{ij}\xi_{i}\xi_{j}\vert \ge t  \big\}\le 2\exp\Big(-c\min\Big\{\frac{t^{2}}{L^{4}\Vert A\Vert_{F}^{2}}, \frac{t}{L^{2}\Vert A\Vert_{l_{2}\to l_{2}}}  \Big\}   \Big).\nonumber
\end{align}

An interesting extension is to consider the $\alpha$-subexponential case. In particular, Sambale \cite{Sambale} proved when $\xi_{i}$ are $\alpha$-subexponential variables ($0<\alpha\le 2$) and $A$ is symmetric
\begin{align}\label{Eq_Hanson_Wright_alpha_subexponential}
	&\textsf{P}\big\{\vert \sum a_{ij}\xi_{i}\xi_{j}-\textsf{E}\sum a_{ij}\xi_{i}\xi_{j}\vert \ge t  \big\}\nonumber\\
	\le& 2\exp\Big(-c\min\Big\{\frac{t^{2}}{L^{4}\Vert A\Vert_{F}^{2}}, \Big(\frac{t}{L^{2}\Vert A\Vert_{l_{2}\to l_{2}}}\Big)^{\alpha/2}  \Big\}   \Big).
\end{align} 

Exploring the properties of $\sup_{A\in\mathcal{A}}\sum a_{ij}\xi_{i}\xi_{j}$ is also an interesting extension, where $\mathcal{A}$ is a family of fixed matrices. We refer  the interested readers to \cite{Klochkov_EJP,Adamczak_AIHP_2,Talagrand_invention} for some celebrated results in this direction. 

Recently, Dai et al. \cite{Dai_Su_Wang_U, Dai_Su} showed the following uniform Hanson-Wright type deviation inequalities, which extend a well-known result \cite{Rauhut_CPAM} from the sub-Gaussian case to the $\alpha$-subexponential case.  Before introducing their result, we first give some notations. Let $\mathcal{A} $ be a matrix set. Then the radii of $\mathcal{A}$ w.r.t. the Frobenius and the operator norm are defined as
$$
M_F(\mathcal{A})=\sup _{\boldsymbol{A} \in \mathcal{A}}\|\boldsymbol{A}\|_{\mathrm{F}} \quad \text { and } \quad M_{l_\alpha \rightarrow l_{\beta}}(\mathcal{A})=\sup _{\boldsymbol{A} \in \mathcal{A}}\|\boldsymbol{A}\|_{l_\alpha \rightarrow l_\beta},
$$
Let
\begin{align}
	\Gamma(\alpha, \beta, \mathcal{A})=\gamma_{2}(\mathcal{A}, \Vert\cdot\Vert_{l_{2}\to l_{2}})+\gamma_{\alpha}(\mathcal{A}, \Vert\cdot\Vert_{l_{2}\to l_{\beta}})\nonumber
\end{align}
and 
\begin{align}
	U_{1}(\alpha, \beta)=& \Gamma(\alpha, \beta,  \mathcal{A}) \big(\Gamma(\alpha, \beta,  \mathcal{A})+M_{F}(\mathcal{A}) \big),\nonumber\\
	U_{2}(\alpha, \beta)=&M_{l_{2}\to l_{2}}(\mathcal{A})\Gamma(\alpha, \beta, \mathcal{A})+\sup_{A\in\mathcal{A}}\Vert A^\top A\Vert_{F},\nonumber\\
	U_{3}(\alpha, \beta)=&M_{l_{2}\to l_{\beta}}(\mathcal{A})\Gamma(\alpha, \beta, \mathcal{A}) .\nonumber
\end{align}

Let $\xi_{i}$ be independent centered $\alpha$-subexponential variables ($0<\alpha\le 2$) and $L=L(\alpha)=\max_{i}\Vert\xi_{i}\Vert_{\Psi_{\alpha}}$. Dai et al. (see Corollary $1.2^{*}$ in \cite{Dai_Su_Wang_U} and Theorem 1.1 in \cite{Dai_Su}) proved for $t\ge 0$
\begin{align}
	&\textsf{P}\Big\{ \sup_{A\in \mathcal{A}}\big\vert \Vert A\xi\Vert_{2}^{2}-\textsf{E}\Vert A\xi\Vert_{2}^{2}  \big\vert>C(\alpha)L^{2}(U_{1}(\alpha, \alpha^{*})+t)\Big\}\nonumber\\
	\le& C_{1}(\alpha)\exp\Big(-\min\Big\{(\frac{t}{U_{2}(\alpha, \alpha^{*})})^{2},( \frac{t}{U_{3}(\alpha, \alpha^{*})})^{\alpha}, (\frac{t}{M_{l_{2}\to l_{2}}^{2}(\mathcal{A})})^{\alpha/2}\Big\}\Big).\nonumber
\end{align}
Here, $\mathcal{A}$ is a family of $m\times n$ fixed matrices, $\xi=(\xi_{1},\cdots, \xi_{n})^{\top}$, and $\alpha^{*}$ is defined as follows:
\[\alpha^{*}=\left\{\begin{array}{ll}
	\frac{\alpha}{\alpha-1}&1<\alpha\le 2,\\
	\infty&
	0<\alpha\le 1.
\end{array}\right.\]
In fact, one would expect from \eqref{Eq_Hanson_Wright_alpha_subexponential} that $U_{2}(\alpha, \alpha^{*})$ in the above deviation inequality can be replaced by the smaller factor $\sup_{A\in\mathcal{A}}\Vert A^\top A\Vert_{F}$. Our first main result shows that this is indeed possible.
\begin{mytheo}\label{Theo_uniform_hansonwright}
	Let $\xi=(\xi_{1},\cdots, \xi_{n})$ be a random vector with independent centered $\alpha$-subexponential variables ($0<\alpha\le 2$) and $\mathcal{A}$ is a family of $m\times n$ fixed matrices. We have for $t\ge 0$
	\begin{align}
		&\textsf{P}\Big\{ \sup_{A\in \mathcal{A}}\big\vert \Vert A\xi\Vert_{2}^{2}-\textsf{E}\Vert A\xi\Vert_{2}^{2}  \big\vert>C(\alpha)L^{2}(U_{1}(\alpha, \alpha^{*})+t)\Big\}\nonumber\\
		\le& C_{1}(\alpha)\exp\Big(-\min\Big\{(\frac{t}{\sup_{A\in \mathcal{A}}\Vert A^{\top}A\Vert_{F}})^{2}, (\frac{t}{M_{l_{2}\to l_{2}}^{2}(\mathcal{A})})^{\alpha/2}\Big\}\Big),\nonumber
	\end{align}
where $U_{1}(\alpha, \alpha^{*})$ and $L$ are defined as above.
\end{mytheo}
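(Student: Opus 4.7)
The plan is to decouple the quadratic chaos into a bilinear one, apply generic chaining to the resulting process indexed by $\mathcal{A}$, and convert the $L^p$ moment estimate into a tail bound via Lemma \ref{Lem_Moments_2}. The improvement over \cite{Dai_Su_Wang_U} comes from identifying the two deviation factors with the diameters of the squared family $\mathcal{A}^{(2)}:=\{A^\top A:A\in\mathcal{A}\}$ with respect to the Frobenius and operator norms; these are bounded by $2\sup_{A\in\mathcal{A}}\Vert A^\top A\Vert_F$ and $2M_{l_2\to l_2}^2(\mathcal{A})$, respectively, which are exactly the factors in the target bound.

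First, writing $\Vert A\xi\Vert_2^2=\xi^\top A^\top A\xi$, I would apply Lemma \ref{Lem_decoupling} with $F(x)=|x|^p$ to the symmetric family $\{A^\top A:A\in\mathcal{A}\}$ to reduce the problem to bounding $\Vert\sup_A|\xi^\top A^\top A\tilde\xi|\Vert_{L_p}$ for an independent copy $\tilde\xi$ of $\xi$. Fix a reference $A_0\in\mathcal{A}$ and split the supremum as $|\xi^\top A_0^\top A_0\tilde\xi|+\sup_A|\xi^\top(A^\top A-A_0^\top A_0)\tilde\xi|$. The base point is handled directly by Lemma \ref{Lem_bound_pth_decoupled}, which gives $\lesssim_\alpha L^2(p^{1/2}\Vert A_0^\top A_0\Vert_F+p^{2/\alpha}\Vert A_0\Vert_{l_2\to l_2}^2)$, and this is already absorbed into the target deviation.

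For the fluctuation, set $X_A:=\xi^\top A^\top A\tilde\xi$. Lemma \ref{Lem_bound_pth_decoupled} together with the elementary estimates
\[
\Vert A^\top A-B^\top B\Vert_F\le 2M_F(\mathcal{A})\Vert A-B\Vert_{l_2\to l_2},\qquad \Vert A^\top A-B^\top B\Vert_{l_2\to l_2}\le 2M_{l_2\to l_2}(\mathcal{A})\Vert A-B\Vert_{l_2\to l_2}
\]
yields the mixed sub-Gaussian / $\alpha/2$-subexponential increment bound $\Vert X_A-X_B\Vert_{L_p}\lesssim_\alpha L^2(p^{1/2}M_F(\mathcal{A})+p^{2/\alpha}M_{l_2\to l_2}(\mathcal{A}))\Vert A-B\Vert_{l_2\to l_2}$. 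I would then run a generic chaining argument on $(\mathcal{A},\Vert\cdot\Vert_{l_2\to l_2})$ in the spirit of \cite{Dai_Su}, but arranged so that the "chaining integral" contributes the expectation term $L^2U_1(\alpha,\alpha^*)$, while the $p$-dependent deviation is charged only to the diameters of the \emph{image set} $\mathcal{A}^{(2)}$ at the finest level of the chain. Since $\Delta_F(\mathcal{A}^{(2)})\le 2\sup_A\Vert A^\top A\Vert_F$ and $\Delta_{l_2\to l_2}(\mathcal{A}^{(2)})\le 2M_{l_2\to l_2}^2(\mathcal{A})$, this gives exactly the two factors in the theorem, avoiding the cross terms $M_{l_2\to l_2}(\mathcal{A})\Gamma$ and $M_{l_2\to l_{\alpha^*}}(\mathcal{A})\Gamma$ that appear in the bound of \cite{Dai_Su_Wang_U}.

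Combining the pieces yields, for $p\ge 2$,
\[
\Big\Vert\sup_{A\in\mathcal{A}}\big|\Vert A\xi\Vert_2^2-\textsf{E}\Vert A\xi\Vert_2^2\big|\Big\Vert_{L_p}\lesssim_\alpha L^2\Big(U_1(\alpha,\alpha^*)+p^{1/2}\sup_{A\in\mathcal{A}}\Vert A^\top A\Vert_F+p^{2/\alpha}M_{l_2\to l_2}^2(\mathcal{A})\Big),
\]
and the stated tail inequality follows from Lemma \ref{Lem_Moments_2} with $m=2$, $\beta_1=1/2$, $\beta_2=2/\alpha$, $C_1=L^2\sup_A\Vert A^\top A\Vert_F$, $C_2=L^2M_{l_2\to l_2}^2(\mathcal{A})$ and $C_3=L^2U_1(\alpha,\alpha^*)$. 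The main obstacle is the chaining step: one must design the admissible sequence so that the scale-wise deviation increments telescope into the overall diameter of $\mathcal{A}^{(2)}$ rather than summing into a full chaining functional, mirroring the multi-scale peeling used by Krahmer--Mendelson--Rauhut \cite{Rauhut_CPAM} but adapted to the $\alpha$-subexponential chaos setting and to the $p^{2/\alpha}$ growth rate coming from Lemma \ref{Lem_bound_pth_decoupled}.
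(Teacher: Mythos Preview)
Your high-level plan (decouple, chain, convert the resulting $L_p$ bound to a tail via Lemma~\ref{Lem_Moments_2}) and the target moment estimate
\[
\Big\Vert\sup_{A\in\mathcal{A}}\big|\Vert A\xi\Vert_2^2-\textsf{E}\Vert A\xi\Vert_2^2\big|\Big\Vert_{L_p}\lesssim_\alpha L^2\Big(U_1(\alpha,\alpha^*)+p^{1/2}\sup_{A}\Vert A^\top A\Vert_F+p^{2/\alpha}M_{l_2\to l_2}^2(\mathcal{A})\Big)
\]
coincide with the paper's. The gap is in the chaining step you flag as the ``main obstacle'': as you set it up it will not yield the expectation term $U_1(\alpha,\alpha^*)$. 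Your increment bound coming from Lemma~\ref{Lem_bound_pth_decoupled} lives entirely in the metrics $\Vert\cdot\Vert_F$ and $\Vert\cdot\Vert_{l_2\to l_2}$ on $\mathcal{A}^{(2)}$ (or, after your Lipschitz estimates, in $\Vert\cdot\Vert_{l_2\to l_2}$ on $\mathcal{A}$). A Dirksen-type mixed-tail chaining with exponents $p^{1/2}$ and $p^{2/\alpha}$ then produces $\gamma_2$ and $\gamma_{\alpha/2}$ functionals in those metrics, e.g.\ $M_{l_2\to l_2}(\mathcal{A})\,\gamma_{\alpha/2}(\mathcal{A},\Vert\cdot\Vert_{l_2\to l_2})$, which is in general \emph{larger} than the $\gamma_\alpha(\mathcal{A},\Vert\cdot\Vert_{l_2\to l_{\alpha^*}})$ that enters $\Gamma(\alpha,\alpha^*,\mathcal{A})$ (both the index $\alpha/2<\alpha$ and the norm $\Vert\cdot\Vert_{l_2\to l_2}\ge\Vert\cdot\Vert_{l_2\to l_{\alpha^*}}$ go the wrong way). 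So the ``chaining integral contributes $L^2U_1(\alpha,\alpha^*)$'' step does not follow from the ingredients you list.

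The paper avoids this by \emph{not} chaining the bilinear increments directly. After decoupling to i.i.d.\ Weibull $\eta,\tilde\eta$, it invokes the conditional/linear chaining bound from \cite{Dai_Su,Dai_Su_Wang_U},
\[
\big\Vert\sup_{A}|\eta^\top A^\top A\tilde\eta|\big\Vert_{L_p}\lesssim_\alpha \big\Vert\sup_{A}\Vert A\eta\Vert_2\big\Vert_{L_p}\cdot\Gamma(\alpha,\alpha^*,\mathcal{A})+\sup_{A}\big\Vert\eta^\top A^\top A\tilde\eta\big\Vert_{L_p},
\]
which already contains the correct $\Gamma$. The single-matrix term on the right is handled by Lemma~\ref{Lem_bound_pth_decoupled} and supplies exactly $p^{1/2}\sup_A\Vert A^\top A\Vert_F+p^{2/\alpha}M_{l_2\to l_2}^2(\mathcal{A})$. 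The factor $\Vert\sup_A\Vert A\eta\Vert_2\Vert_{L_p}$ is controlled by the elementary square-root trick
\[
\big\Vert\sup_A\Vert A\eta\Vert_2\big\Vert_{L_p}\le \Big\Vert\sup_A\big|\Vert A\eta\Vert_2^2-\textsf{E}\Vert A\eta\Vert_2^2\big|\Big\Vert_{L_p}^{1/2}+M_F(\mathcal{A}),
\]
which makes the inequality self-referential; solving $x\lesssim\Gamma\sqrt{x}+\Gamma M_F+\cdots$ gives the $\Gamma(\Gamma+M_F)=U_1$ term. This bootstrap, not a telescoping of the bilinear chain, is the mechanism that produces $U_1(\alpha,\alpha^*)$ while keeping the $p$-dependent part attached only to $\sup_A\Vert A^\top A\Vert_F$ and $M_{l_2\to l_2}^2(\mathcal{A})$.
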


\begin{proof}
Without loss of generality, we assume $L=1$.	Let $\eta_{1}, \cdots, \eta_{n}\stackrel{i.i.d.}{\sim}\mathcal{W}_{s}(\alpha)$. Then, Lemma \ref{Lem_decoupling} yields for $p\ge 1$
	\begin{align}\label{Eq_main_decoupling}
		\Big\Vert \sup_{A\in \mathcal{A}}\big\vert \Vert A\xi\Vert_{2}^{2}-\textsf{E}\Vert A\xi\Vert_{2}^{2}  \big\vert \Big\Vert_{L_{p}}\lesssim_{\alpha} \Big\Vert \sup_{A\in \mathcal{A}}\big\vert\eta^\top A^{\top} A\tilde{\eta}  \big\vert \Big\Vert_{L_{p}},
	\end{align}
where $\eta=(\eta_{1}, \cdots,\eta_{n})^{\top}$ and $\tilde{\eta}$ is an independent copy of $\eta$.

 For convienence, denote by $d_{2}$ and $d_{\alpha^{*}}$ the distance on $\mathcal{A}$ induced by $\Vert\cdot\Vert_{l_{2}\to l_{2}}$ and $\Vert\cdot\Vert_{l_{2}\to l_{\alpha^{*}}}$. Recalling the definition of $\gamma^{\prime}_{\alpha}$ functionals, we can select two admissible sequences of partitions $\mathcal{T}^{(1)}=(\mathcal{T}^{(1)}_{n})_{n\ge 0}$ and $\mathcal{T}^{(2)}=(\mathcal{T}^{(2)}_{n})_{n\ge 0}$ of $\mathcal{A}$ such that
\begin{align}
	\sup_{A\in \mathcal{A}}\sum_{n\ge 0}2^{n/2}\Delta_{d_{2}}(T_{n}^{(1)}(A))\le 2\gamma_{2}^{\prime}(\mathcal{A}, d_{2}),\quad \sup_{A\in \mathcal{A}}\sum_{n\ge 0}2^{n/\alpha}\Delta_{d_{\alpha^{*}}}(T_{n}^{(2)}(A))\le 2\gamma_{\alpha}^{\prime}(\mathcal{A}, d_{\alpha^{*}}).\nonumber
\end{align}
Let $\mathcal{T}_{0}=\{\mathcal{A}\}$ and 
\begin{align}
	\mathcal{T}_{n}=\{T^{(1)}\cap T^{(2)}:  T^{(1)}\in \mathcal{T}^{(1)}_{n-1}, T^{(2)}\in \mathcal{T}^{(2)}_{n-1}   \},\quad n\ge 1\nonumber.
\end{align}
Then, $\mathcal{T}=(\mathcal{T}_{n})_{n\ge 0}$ is a sequence of  increasing partitions  and 
\begin{align}
	\vert \mathcal{T}_{n}\vert\le \vert \mathcal{T}^{(1)}_{n-1}\vert\vert \mathcal{T}^{(2)}_{n-1}\vert\le 2^{2^{n-1}}2^{2^{n-1}}=2^{2^{n}}.\nonumber
\end{align}
We next define a subset $\mathcal{A}_{n}$ of $\mathcal{A}$ by selecting exactly one point from each $T\in \mathcal{T}_{n}$. By this means, we build an admissible sequence $(\mathcal{A}_{n})_{n\ge 0}$ of subsets of $\mathcal{A}$. Let $\pi=\{ \pi_{r}, r\ge 0\}$ be a sequence of functions $\pi_{n}: \mathcal{A}\to \mathcal{A}_{n}$ such that $\pi_{n}(A)=\mathcal{A}_{n}\cap T_{n}(A)$, where $T_{n}(A)$ is the element of $\mathcal{T}_{n}$ containing $A$. Let $l$ be the largest integer satisfying $2^{l}\le p$.
Then, we have for $p\ge 1$ (see Proposition 3.1 in \cite{Dai_Su} and Lemma 3.1 in \cite{Dai_Su_Wang_U})
\begin{align}\label{Eq_decomp}
	\big\Vert \sup_{A\in \mathcal{A}}\vert \eta^\top A^\top A\tilde{\eta}\vert\big\Vert_{L_{p}}\lesssim_{\alpha} 	\big\Vert \sup_{A\in\mathcal{A}}\Vert A\eta\Vert_{2} \big\Vert_{L_{p}}\cdot\Gamma(\alpha, \alpha^{*}, \mathcal{A})+\sup_{A\in \mathcal{A}}\big\Vert  \eta^\top A^\top A\tilde{\eta}\big\Vert_{L_{p}}.\nonumber
\end{align}

Note that
\begin{align}
	\big\Vert \sup_{A\in\mathcal{A}}\Vert A\eta\Vert_{2} \big\Vert_{L_{p}}&=\Big\Vert \sup_{A\in\mathcal{A}}\big(\Vert A\eta\Vert^{2}_{2}-\textsf{E}\Vert A\eta\Vert^{2}_{2}+\textsf{E}\Vert A\eta\Vert^{2}_{2}\big)^{1/2} \Big\Vert_{L_{p}}\nonumber\\
	&\le\Big\Vert \sup_{A\in\mathcal{A}}\big\vert\Vert A\eta\Vert^{2}_{2}-\textsf{E}\Vert A\eta\Vert^{2}_{2}\big\vert\Big\Vert^{1/2}_{L_{p}}+M_{F}(\mathcal{A}).\nonumber
\end{align}
Meanwhile, Lemma \ref{Lem_bound_pth_decoupled} yields for $p\ge 1$
\begin{align}
	\sup_{A\in \mathcal{A}}\big\Vert  \eta^\top A^\top A\tilde{\eta}\big\Vert_{L_{p}}\lesssim p^{1/2}\sup_{A\in\mathcal{A}}\Vert A^{\top}A\Vert_{F}+p^{2/\alpha}\sup_{A\in\mathcal{A}}\Vert A^{\top}A\Vert_{l_{2}\to l_{2}}.\nonumber
\end{align}
 Hence, we have by Lemma \ref{Lem_bound_pth_decoupled}
 \begin{align}
 		\big\Vert \sup_{A\in \mathcal{A}}\vert \eta^\top A^\top A\tilde{\eta}\vert\big\Vert_{L_{p}}\lesssim_{\alpha}& 	\Big(\big\Vert \sup_{A\in \mathcal{A}}\vert \eta^\top A^\top A\tilde{\eta}\vert\big\Vert_{L_{p}}^{1/2}+M_{F}(\mathcal{A}) \Big)\cdot\Gamma(\alpha, \alpha^{*}, \mathcal{A})\nonumber\\
 		&+p^{1/2}\sup_{A\in\mathcal{A}}\Vert A^{\top}A\Vert_{F}+p^{2/\alpha}\sup_{A\in\mathcal{A}}\Vert A^{\top}A\Vert_{l_{2}\to l_{2}}.\nonumber
 \end{align}
Then, \eqref{Eq_main_decoupling} yields that
\begin{align}
	\Big\Vert \sup_{A\in \mathcal{A}}\big\vert \Vert A\xi\Vert_{2}^{2}-\textsf{E}\Vert A\xi\Vert_{2}^{2}  \big\vert \Big\Vert_{L_{p}}\lesssim_{\alpha}& U_{1}(\alpha, \alpha^{*})+p^{1/2}\sup_{A\in\mathcal{A}}\Vert A^\top A\Vert_{F}\nonumber\\
	&+p^{2/\alpha}\sup_{A\in\mathcal{A}}\Vert A^\top A\Vert_{l_{2}\to l_{2}}.\nonumber
\end{align}
 The desired result follows from Lemma \ref{Lem_Moments_2}.

\end{proof}

\begin{mycor}\label{t4.1}
	Let $\xi=(\xi_{1},\cdots, \xi_{n})$ be a random vector with independent centered $\varphi$-sub-Gaussian variables and $\mathcal{A}$ is a family of $m\times n$ fixed matrices. We have for $t\ge 0$
	\begin{align}
		&\textsf{P}\Big\{ \sup_{A\in \mathcal{A}}\big\vert \Vert A\xi\Vert_{2}^{2}-\textsf{E}\Vert A\xi\Vert_{2}^{2}  \big\vert>CL_{1}^{2}(U_{1}(1, \infty)+t)\Big\}\nonumber\\
		\le& C_{1}\exp\Big(-\min\Big\{(\frac{t}{\sup_{A\in \mathcal{A}}\Vert A^{\top}A\Vert_{F}})^{2}, (\frac{t}{M_{l_{2}\to l_{2}}^{2}(\mathcal{A})})^{1/2}\Big\}\Big),\nonumber
	\end{align}
	where $U_{1}(1, \infty)$ is defined as above and $L_{1}=\max_{i}\tau_{\varphi}(\xi_{i})$.
\end{mycor}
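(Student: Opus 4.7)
The plan is to deduce Corollary \ref{t4.1} from Theorem \ref{Theo_uniform_hansonwright} specialized to $\alpha = 1$ (for which $\alpha^* = \infty$ by the paper's convention): the right-hand side of the corollary, with $U_{1}(1,\infty)$ together with the exponents $(t/\sup_{A\in\mathcal{A}}\Vert A^{\top}A\Vert_{F})^{2}$ and $(t/M_{l_{2}\to l_{2}}^{2}(\mathcal{A}))^{1/2}$, is exactly what Theorem \ref{Theo_uniform_hansonwright} outputs at $\alpha=1$. Consequently the only task is to verify that each coordinate $\xi_{i}$ is $1$-subexponential with $\Vert \xi_{i} \Vert_{\Psi_{1}} \lesssim \tau_{\varphi}(\xi_{i}) \le L_{1}$, so that the hypothesis of Theorem \ref{Theo_uniform_hansonwright} is met with $L \lesssim L_{1}$.

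For this embedding step I would start from the incremental inequality \eqref{2.1} applied to $X_{t}-X_{s}=\xi_{i}$ (or, more directly, from Lemma \ref{l2} combined with Markov's inequality), which yields
\begin{equation*}
\textsf{P}\bigl(|\xi_{i}| \ge u L_{1}\bigr) \le 2\exp\bigl(-\varphi^{*}(u)\bigr), \qquad u > 0.
\end{equation*}
Because $\varphi$ is an Orlicz $N$-function and is finite on all of $\mathbb{R}$, convex duality forces its Young-Fenchel conjugate $\varphi^{*}$ to be an $N$-function as well; in particular $\varphi^{*}(u)/u \to \infty$ as $u \to \infty$. Hence one can pick $u_{0}, c > 0$ depending only on $\varphi$ with $\varphi^{*}(u) \ge cu$ for $u \ge u_{0}$, and a trivial union bound over the compact range $u < u_{0}$ upgrades this to $\textsf{P}(|\xi_{i}| \ge uL_{1}) \le C e^{-cu}$ for every $u > 0$. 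This is precisely a $1$-subexponential tail, so $\Vert \xi_{i}\Vert_{\Psi_{1}} \le C' L_{1}$ with $C,C'$ depending only on $\varphi$; those constants will be absorbed into the final $C, C_{1}$ of the corollary.

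Plugging this bound into Theorem \ref{Theo_uniform_hansonwright} at $\alpha = 1$ gives the corollary directly. No further chaining or decoupling is needed, since all the heavy lifting, including the production of the factor $U_{1}$ via the admissible sequences built from $\gamma^{\prime}_{2}(\mathcal{A},d_{2})$ and $\gamma^{\prime}_{1}(\mathcal{A},d_{\infty})$, already happens inside the proof of Theorem \ref{Theo_uniform_hansonwright}. The only subtle point is the growth of $\varphi^{*}$ at infinity, and this is handled cleanly by the observation that $\varphi^{*}$ inherits the $N$-function property; I do not foresee any further obstacle.
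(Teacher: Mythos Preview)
Your proposal is correct and follows essentially the same route as the paper. Both arguments hinge on the single observation that, because $\varphi$ is an $N$-function, its conjugate satisfies $\varphi^{*}(u)\gtrsim u$ for $u$ large; the paper phrases this as verifying the tail-comparison hypothesis of Lemma~\ref{Lem_decoupling} against symmetric Weibull$(1)$ variables and then invoking Lemma~\ref{Lem_decoupling} together with Theorem~\ref{Theo_uniform_hansonwright}, while you phrase it as showing $\Vert\xi_{i}\Vert_{\Psi_{1}}\lesssim L_{1}$ and applying Theorem~\ref{Theo_uniform_hansonwright} directly at $\alpha=1$ --- these are equivalent packagings of the same estimate. (A minor wording point: what you call a ``union bound'' for $u<u_{0}$ is really just absorbing the bounded range into the constant $C$, exactly as the paper does; there is nothing to take a union over.)
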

\begin{proof}
	Let $\eta_{1},\cdots, \eta_{n}\stackrel{i.i.d.}{\sim}\mathcal{W}_{s}(1)$. Let $\tilde{\eta}_{i}$ be an independent copy of $\eta_{i}$ for $i\le n$. Then, by the property of N-function, there exist a constant $t_0$ such that $\varphi^*(t)>t$ for $t>t_0$, then we obtain that for $t>t_0$,we have 
	\begin{align}
		\textsf{P}\{\xi^{2}_{i}\ge 4t  \}&\le \exp(-\varphi^{*}(\frac{2\sqrt{t}}{K}))\le \exp(-\frac{2\sqrt{t}}{K})\nonumber\\
		&=\textsf{P}\big\{ \vert\eta_{i}\vert\ge \frac{\sqrt{t}}{K}   \big\}^{2}\le\textsf{P}\{c\vert\eta_{i}\tilde{\eta}_{i}\vert\ge t  \}.\nonumber
	\end{align}
As for $t\le t_{0}$, a trival bound is as follows:
\begin{align}
     \textsf{P}\{\xi^{2}_{i}\ge 4t  \}\le 1=\frac{\textsf{P}\{\vert\eta_{i}\tilde{\eta}_{i}\vert\ge t_{0}  \}}{\textsf{P}\{\vert\eta_{i}\tilde{\eta}_{i}\vert\ge t_{0}  \}}\le \frac{\textsf{P}\{\vert\eta_{i}\tilde{\eta}_{i}\vert\ge t  \}}{\textsf{P}\{\vert\eta_{i}\tilde{\eta}_{i}\vert\ge t_{0}  \}}.\nonumber
\end{align}
Let $c_{1}=\max\{4c, 1/\textsf{P}\big\{\vert\eta_{i}\tilde{\eta}_{i}\vert\ge t_{0}  \}  \big\}$. Then, we have for $t>0$
\begin{align}
	\text{P}\{\xi^{2}_{i}\ge   t\}\le c_{1}\text{P}\{c_{1}\vert\eta_{i}\tilde{\eta}_{i}\vert\ge t  \}.\nonumber
\end{align}
The desired result follows from Lemma \ref{Lem_decoupling} and Theorem \ref{Theo_uniform_hansonwright}.
\end{proof}

\subsection{The group-RIP for block diagonal matrices}
Consider a vector $\mathbf{x} \in \mathbb{C}^D$, which we categorize the observed vectors into $L$ groups based on the dimensions, i.e. $D=\sum_{i=1}^L d_i$. For simplicity, we assume that the dimension $D$ is exactly divisible by the number of groups $L$, i.e. $D$ equals $d$ times $L$ with $d \in \mathbb{N}$ and $d_i=d$ for all $i \in[L]$, and we partition the set of $D$ into $G$ groups as follows, we also refer to \cite{KOEP2022333} for more details.

\begin{mydef}
	A set \(\mathcal{S} = \{\mathcal{S}_1, \ldots, \mathcal{S}_G\}\) of subsets \(\mathcal{S}_i \subseteq [D] := \{1, \ldots, D\}\) is referred to as a group partition of \([D]\) if \(\mathcal{S}_i \cap \mathcal{S}_j = \emptyset\) for all \(i \neq j\), and \(\bigcup_{i=1}^G \mathcal{S}_i = [D]\). Obviously, the elements within $\mathcal{S}_i$ do not need to be consecutive indices, and it is not assumed that the sizes of the individual sets are uniform.

\end{mydef}

Let \( g_i = |\mathcal{S}_i| \) represent the size of each group \( \mathcal{S}_i \), and define \( g = \max_{i \in [G]} g_i \) as the largest group size among all groups. To formalize group sparsity, we introduce the following notation. Consider \(\mathbf{x}_{\mathcal{S}_i} \in \mathbb{C}^D\) as the vector that restricts \(\mathbf{x}\) to the indices in \(\mathcal{S}_i\), where for each \(j \in [D]\), the component is defined by \((\mathbf{x}_{\mathcal{S}_i})_j = x_j \cdot \mathbf{1}_{\{j \in \mathcal{S}_i\}}\). Here, \(\mathbf{1}_{\{j \in \mathcal{S}_i\}}\) is the indicator function.

A signal \(\mathbf{x}\) is termed \(s\)-group-sparse if it is non-zero in at most \(s\) groups. This means there exists a subset \(S \subset [G]\) with \(|S| \leq s\) such that \(\mathbf{x} = \sum_{i \in S} \mathbf{x}_{\mathcal{S}_i}\). Furthermore, we define a set of mixed norms that are well-suited for the applications considered in this paper.

\begin{mydef} Consider $\mathbf{x} \in \mathbb{C}^D$, for any $p \geq 1$, we introduce the group \( \ell_{\mathcal{S}, p} \)-norm on \( \mathbb{C}^D \) defined by 

\[
\|\mathbf{x}\|_{{\mathcal{S}, p}} = \left( \sum_{i=1}^G \left\| \mathbf{x}_{\mathcal{S}_i} \right\|_2^p \right)^{1/p},
\]

\end{mydef}

When considering the case where $p=0$, the $\|\cdot\|_{\mathcal{S}, 0}$ represents the pseudonorm corresponding to the group $\ell_{\mathcal{S}, 0}$ which is essentially a count of the number of groups that contain non-zero components of the vector,
$
\|\mathbf{x}\|_{\mathcal{S}, 0}:=\left|\left\{i \in[G]: \mathbf{x}_{\mathcal{S}_i} \neq \mathbf{0}\right\}\right| .
$

Then $
\Sigma_{\mathcal{S}, s}=\left\{\mathbf{x} \in \mathbb{C}^D:\|\mathbf{x}\|_{\mathcal{S}, 0} \leq s\right\}
$
is defined the set of $s$-group-sparse vectors w.r.t. the group partition $\mathcal{S}$. In particular, 
\[
\sigma_s(\mathbf{x})_{\mathcal{S}, 1} = \inf_{\mathbf{z} \in \Sigma_{\mathcal{S}, s}} \|\mathbf{x} - \mathbf{z}\|_{\mathcal{S}, 1}
\]
represents the optimal $s$-term group approximation error. A key aspect is that this error diminishes quickly as $s$ increases, which is crucial for describing the compressibility characteristics of block diagonal matrices.

As previously mentioned, we observe an \( s \)-group-sparse or compressible signal \( \mathbf{x} \in \Sigma_{\mathcal{I}, s} \) using a block diagonal measurement matrix \( \mathbf{B} \) composed of \( L \) blocks \( \Phi_l \):

\[
\mathbf{B} = \operatorname{diag}(\Phi_1, \Phi_2, \ldots, \Phi_L).
\]

Assuming \( \mathbf{x} \) is known only through its basis expansion \( z = \Psi \mathbf{x} \) in a unitary basis \( \Psi \in U(D) := \{ U \in \mathbb{C}^{D \times D} : U^{*} U = I_{D} \} \), the measurement model simplifies to:

\begin{equation}\label{koe1}
y = \mathbf{B} z = \mathbf{B} \Psi x. 
\end{equation}

The main objective of this paper is to establish sufficient conditions that guarantee the stable and robust reconstruction of group-sparse signals. To achieve this, we focus on formulating an appropriate Restricted Isometry Property (RIP) for block diagonal matrices when they act on group-sparse vectors.

Now, let us introduce the concept of group restricted isometry property (group-RIP), which is a generalization of the renowned RIP, modeled upon the block-sparse RIP initially proposed in \cite{elder_mis}.

\begin{mydef}\label{b2.1}

A matrix \( \mathbf{B} \in \mathbb{C}^{M \times D} \) is said to have the group restricted isometry property (group-RIP) of order \( s \) if there exists a constant \( \delta \in (0, 1) \) such that, for all $\mathbf{x} \in \Sigma_{\mathcal{S}, s}$, the following inequality holds:

	\begin{equation}\label{ko2}
		(1-\delta)\|\mathbf{x}\|_2^2 \leq\|\mathbf{B} \mathbf{x}\|_2^2 \leq(1+\delta)\|\mathbf{x}\|_2^2 \quad \forall \mathbf{x} \in \Sigma_{\mathcal{S}, s}
	\end{equation}

The smallest constant \( \delta_s \leq \delta \) satisfying this inequality is referred to as the group restricted isometry constant (group-RIC) of $\mathbf{B}$.
\end{mydef}

It is noteworthy that the stability and robustness results concerning the group RIP constant have already been established in the seminal work of Eldar and Mishali \cite{elder_mis} as follows, albeit with the necessary condition $\delta_{2 s}<\sqrt{2}-1$ on the group-RIP constant. In the work of Koep et al.\cite{KOEP2022333}, they extended the results of group diagonal random matrices to random matrices where each group consists of sub-Gaussian random variables, and derived their RIP condition.

\begin{mytheo}[\cite{KOEP2022333}] 
Consider a block diagonal random matrix \(\mathbf{B} = \operatorname{diag}\{\boldsymbol{\Phi}_l\} \in \mathbb{R}^{mL \times dL}\), where each block \(\boldsymbol{\Phi}_l\) consists of independent sub-Gaussian random variables with zero mean, unit variance, and sub-Gaussian norm \(\tau\). Let \(\boldsymbol{\Psi} \in \mathrm{U}(dL)\) be a unitary matrix. Then, with probability at least \(1 - \eta\), the scaled matrix \(m^{-1/2} \mathbf{B} \boldsymbol{\Psi}\) satisfies the group restricted isometry property (group-RIP) of order \(s\) with respect to the group partition \(\mathcal{S}\), and the group restricted isometry constant \(\delta_s\) satisfies \(\delta_s \leq \delta\) under certain conditions.

	$$
	m \gtrsim_\tau \delta^{-2}\left[s \mu_{\mathcal{S}}^2 \log (D) \log (s)^2\left(\log (G)+g \log \left(s / \mu_{\mathcal{S}}\right)\right)+\log \left(\eta^{-1}\right)\right],
	$$
	where
	$$
	\mu_{\mathcal{S}}=\mu_{\mathcal{S}}(\Psi):=\min \left\{\sqrt{d} \max _{i \in[D]}\left\|\boldsymbol{\psi}_i\right\|_{\mathcal{S}, \infty}, 1\right\}
	$$
	and $\boldsymbol{\psi}_i \in \mathbb{C}^D$ denotes the $i$-th row of $\boldsymbol{\Psi}$.
\end{mytheo}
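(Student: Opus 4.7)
The plan is to reduce the group-RIP verification to a uniform quadratic chaos, apply the sub-Gaussian specialization of Theorem \ref{Theo_uniform_hansonwright}, and control the resulting complexity parameters by chaining over the group-sparse unit sphere.

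Set $T := \Sigma_{\mathcal{S},s}\cap S^{D-1}$. By Definition \ref{b2.1}, $\delta_s = \sup_{\mathbf{x}\in T}|m^{-1}\|\mathbf{B}\boldsymbol{\Psi}\mathbf{x}\|_2^2 - 1|$, and because the entries of $\mathbf{B}$ have unit variance and $\boldsymbol{\Psi}$ is unitary, $\mathbb{E}\|\mathbf{B}\boldsymbol{\Psi}\mathbf{x}\|_2^2 = m$; thus this is a centered quadratic chaos. Split $\boldsymbol{\Psi}\mathbf{x}$ into measurement blocks $z_1,\dots,z_L\in\mathbb{C}^d$ according to the block structure of $\mathbf{B}$, and let $\xi$ be the concatenation of all independent entries of the $\Phi_l$'s. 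Then $\mathbf{B}\boldsymbol{\Psi}\mathbf{x}=A_{\mathbf{x}}\xi$ where $A_{\mathbf{x}}$ is block-diagonal with $l$-th block $z_l^{\top}\otimes I_m$. Setting $\mathcal{A}:=\{m^{-1/2}A_{\mathbf{x}}:\mathbf{x}\in T\}$ recasts the problem as bounding $\sup_{A\in\mathcal{A}}|\|A\xi\|_2^2-\mathbb{E}\|A\xi\|_2^2|$.

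Kronecker-product identities then supply the deterministic radii: $M_F(\mathcal{A})=1$ (since $\|A_\mathbf{x}\|_F^2 = m\|\boldsymbol{\Psi}\mathbf{x}\|_2^2 = m$); $\sup_{A\in\mathcal{A}}\|A^{\top}A\|_F^2 = m^{-1}\sum_l\|z_l\|^4 \le m^{-1}\max_l\|z_l\|^2$; and $M_{l_2\to l_2}(\mathcal{A}) = m^{-1/2}\sup_{\mathbf{x}\in T}\max_l\|z_l\|_2$. An elementary Cauchy--Schwarz estimate using $\|\psi_i\|_{\mathcal{S},\infty}\le \mu_{\mathcal{S}}/\sqrt{d}$ yields $|(\boldsymbol{\Psi}\mathbf{x})_i|\le \mu_{\mathcal{S}}\sqrt{s/d}$ for $\mathbf{x}\in T$, hence $\max_l\|z_l\|_2\le \mu_{\mathcal{S}}\sqrt{s}$ and $M_{l_2\to l_2}(\mathcal{A})\le \mu_{\mathcal{S}}\sqrt{s/m}$. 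Under the sub-Gaussian specialization $\alpha=\alpha^{\ast}=2$, the only remaining ingredient is $\Gamma(2,2,\mathcal{A})=2\gamma_2(\mathcal{A},\|\cdot\|_{l_2\to l_2})$; via the Lipschitz correspondence $\mathbf{x}\mapsto m^{-1/2}A_{\mathbf{x}}$, this reduces to the $\gamma_2$-functional of $T$ in the metric $d(\mathbf{x},\mathbf{y})=\max_l\|(\boldsymbol{\Psi}(\mathbf{x}-\mathbf{y}))_l\|_2$.

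I would bound this $\gamma_2$-functional by a two-scale chaining argument in the spirit of \cite{Rauhut_CPAM}: at small scales a volumetric covering of the $s$-group-sparse unit ball supplies the combinatorial factors $\log G$ (the choice of $s$ active groups) and $g\log(s/\mu_{\mathcal{S}})$ (internal coordinates inside each active group), while at large scales a Maurey-type empirical-method bound exploiting the row $\ell_{\mathcal{S},\infty}$-norms of $\boldsymbol{\Psi}$ produces the factor $\sqrt{s\mu_{\mathcal{S}}^2\log D}$, with an additional $\log s$ absorbed at a second chaining level. Inserting the radii and the $\gamma_2$-estimate into Theorem \ref{Theo_uniform_hansonwright} and requiring the resulting bound on $\delta_s$ to be at most $\delta$ with probability at least $1-\eta$ produces the stated lower bound on $m$. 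The main obstacle is this chaining step: one must reconcile the unrelated measurement-block partition of $[D]$ into chunks of size $d$ with the group partition $\mathcal{S}$, extract the precise $\mu_{\mathcal{S}}$-dependence, and manage the $\log(s)^2$ factor that is the characteristic signature of the multi-scale argument.
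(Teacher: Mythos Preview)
This theorem is not proved in the paper at all: it is quoted as the main result of Koep--Behboodi--Mathar \cite{KOEP2022333} and stated without proof, serving only as the benchmark that the paper's own $\varphi$-sub-Gaussian extension (the subsequent theorem) is meant to generalize. So there is no ``paper's own proof'' to compare against.

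That said, your proposal is a faithful reconstruction of the argument in \cite{KOEP2022333}, and it coincides almost line-for-line with the template the present paper uses to prove its own extension. The reformulation $\mathbf{B}\boldsymbol{\Psi}\mathbf{x}=A_{\mathbf{x}}\xi$ with block-diagonal $A_{\mathbf{x}}$, the Kronecker-product computation of $M_F$ and $M_{l_2\to l_2}$, and the bound $\max_l\|z_l\|_2\le\mu_{\mathcal{S}}\sqrt{s}$ are exactly what the paper invokes (it writes $V(\mathbf{x})$ for your $A_{\mathbf{x}}$ and explicitly cites \cite{KOEP2022333} for these radii). For the sub-Gaussian case $\alpha=2$ only $\gamma_2(\mathcal{A},\|\cdot\|_{l_2\to l_2})$ enters, and the two-scale covering---volumetric at small radii, Maurey at large radii, spliced at $\lambda=\mu_{\mathcal{S}}$---is precisely the mechanism in \cite{KOEP2022333} that produces the $\sqrt{s}\mu_{\mathcal{S}}\sqrt{\log D}\log s\bigl(\sqrt{\log G}+\sqrt{g\log(s/\mu_{\mathcal{S}})}\bigr)$ factor the paper quotes. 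The only cosmetic difference is that you invoke Theorem~\ref{Theo_uniform_hansonwright} with $\alpha=2$, whereas Koep et~al.\ use Dirksen's earlier version \cite{Dirksen_EJP}; for $\alpha=2$ these coincide up to constants, so nothing is lost or gained.
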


We are now in a position to propose our second main result,  the RIP property of block-diagonal random matrices with $\varphi$-sub-Gaussian random variables entries. We will make use of the bound on the suprema of chaos processes first established in this paper to demonstrate that the block diagonal matrix $\mathbf{B} \Psi \in \mathbb{C}^{M \times D}$ satisfies the group restricted isometry property with high probability on the draw of $\mathbf{B}$. The same technique was also employed in \cite{Eftekhari_yap} to prove the canonical restricted isometry property for block diagonal matrices consisting of sub-Gaussian blocks. Koep et al \cite{KOEP2022333} considered an improved version of the bound due to Dirksen \cite{Dirksen_EJP}. We make use of the bound in Corollary \ref{t4.1}.

\begin{mytheo}
Let \(\mathbf{B} = \operatorname{diag}\{\boldsymbol{\Phi}_l\} \in \mathbb{R}^{mL \times dL}\), where each block \(\boldsymbol{\Phi}_l\) consists of independent, zero-mean, unit-variance, \(\varphi\)-sub-Gaussian random variables with sub-Gaussian norm \(\|\cdot\|_{\varphi}\). Let \(\Psi \in \mathrm{U}(dL)\) be a unitary matrix. Then, with probability at least \(1 - \eta\), the scaled matrix \(m^{-1/2} \mathbf{B} \Psi\) satisfies the group restricted isometry property (group-RIP) of order \(s\) with constant \(\delta_s \leq \delta\), provided that

\[
m \gtrsim_{\tau} \delta^{-2} \left[ s^2 \mu_{\mathcal{S}}^2 \log^2 D \left( \log^2 G + g^2 \log^2 \left( \frac{s}{\mu_{\mathcal{S}}} \right) \right) + \log \left( \frac{1}{\eta} \right) \right],
\]

where $\mu_{\mathcal{S}}$ and $\boldsymbol{\psi}_i$ is denote as above.
\end{mytheo}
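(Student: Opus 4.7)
The plan is to recast the group-RIP condition as the supremum of a chaos process indexed by normalized group-sparse vectors, and then invoke Corollary \ref{t4.1} directly. Collect the entries of $\mathbf{B}$ into a single random vector $\xi\in\mathbb{R}^{mdL}$ of independent, centered, unit-variance, $\varphi$-sub-Gaussian coordinates. For $\mathbf{x}\in\Sigma_{\mathcal{S},s}$ with $\|\mathbf{x}\|_2=1$, write $z=\Psi\mathbf{x}=(z_1,\dots,z_L)$ along the block structure of $\mathbf{B}$. A direct calculation gives $\mathbf{B}\Psi\mathbf{x}=A(z)\xi$, where $A(z):=\operatorname{diag}(I_m\otimes z_l^\top)_{l=1}^{L}$, and $\textsf{E}\|A(z)\xi\|_2^2=\|A(z)\|_F^2=m$. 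Setting $\mathcal{A}:=\{A(\Psi\mathbf{x}):\mathbf{x}\in\Sigma_{\mathcal{S},s},\,\|\mathbf{x}\|_2=1\}$, the group-RIC bound $\delta_s\le\delta$ is equivalent to $\sup_{A\in\mathcal{A}}\bigl|\,\|A\xi\|_2^2-\textsf{E}\|A\xi\|_2^2\,\bigr|\le\delta m$.

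The geometric parameters of $\mathcal{A}$ required in Corollary \ref{t4.1} can be estimated directly. From $\|A(z)\|_{l_2\to l_2}=\max_l\|z_l\|_2=\max_l\|P_l\Psi\mathbf{x}\|_2$ and the Cauchy-Schwarz estimate $|(\Psi\mathbf{x})_i|\le\|\boldsymbol{\psi}_i\|_{\mathcal{S},\infty}\|\mathbf{x}\|_{\mathcal{S},1}\le\sqrt{s}\,\|\boldsymbol{\psi}_i\|_{\mathcal{S},\infty}$, one obtains $M_{l_2\to l_2}(\mathcal{A})\lesssim\sqrt{s}\,\mu_{\mathcal{S}}$. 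A Kronecker identity shows $A(z)^{\top}A(z)=\operatorname{diag}(I_m\otimes z_lz_l^{\top})$, hence $\sup_{\mathcal{A}}\|A^{\top}A\|_F^2=m\sup_z\sum_l\|z_l\|^4\le m\,M_{l_2\to l_2}^2(\mathcal{A})$, while $M_F(\mathcal{A})=\sqrt{m}$.

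The dominant task is to bound the Talagrand functional $\Gamma(1,\infty,\mathcal{A})=\gamma_2(\mathcal{A},\|\cdot\|_{l_2\to l_2})+\gamma_1(\mathcal{A},\|\cdot\|_{l_2\to l_\infty})$. Since both ambient norms of $A(z)-A(z')$ coincide with the block-max distance $d(z,z'):=\max_l\|z_l-z'_l\|_2$, the Dudley integrals $\gamma_2(\mathcal{A},d)\lesssim\int_0^{\Delta}\sqrt{\log N(\mathcal{A},d,u)}\,du$ and $\gamma_1(\mathcal{A},d)\lesssim\int_0^{\Delta}\log N(\mathcal{A},d,u)\,du$ become the main objects. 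I would estimate them in the spirit of Koep et al.\ \cite{KOEP2022333} by splicing two coverings: at small scales, a volumetric cover $N(\mathcal{A},d,u)\le\binom{G}{s}(1+2\sqrt{2s}\mu_{\mathcal{S}}/u)^{sg}$ derived from the Lipschitz relation $d(\Psi\mathbf{x},\Psi\mathbf{x}')\le\sqrt{2s}\mu_{\mathcal{S}}\|\mathbf{x}-\mathbf{x}'\|_2$ valid for $2s$-group-sparse differences; at larger scales, a Maurey-type empirical cover whose cardinality is polynomial in $D$. Combining the two regimes should yield $\Gamma(1,\infty,\mathcal{A})\lesssim s\mu_{\mathcal{S}}\log D(\log G+g\log(s/\mu_{\mathcal{S}}))$.

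Plugging these estimates into Corollary \ref{t4.1} with $t\asymp\delta m$, in the regime $\Gamma\le M_F=\sqrt{m}$, one has $U_1(1,\infty)\asymp\Gamma\sqrt{m}$, so the deterministic requirement $CL_1^2U_1(1,\infty)\le\delta m/2$ reduces to $m\gtrsim\Gamma^2/\delta^2$, which reproduces the leading $\delta^{-2}s^2\mu_{\mathcal{S}}^2\log^2 D(\log^2 G+g^2\log^2(s/\mu_{\mathcal{S}}))$ term. The two tail exponents $(t/\sup\|A^{\top}A\|_F)^2$ and $(t/M_{l_2\to l_2}^2)^{1/2}$, evaluated with the bounds above, absorb the additive $\delta^{-2}\log(\eta^{-1})$ contribution at the claimed probability level. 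The main obstacle is the $\gamma_1$ estimate in Step 3: in contrast to the sub-Gaussian case (which uses only $\gamma_2$), the $\varphi$-sub-Gaussian chaos tail forces one to also control the larger $\gamma_1$ functional, and constructing an admissible sequence that simultaneously tracks the combinatorial choice of $s$ groups among $G$, the continuous $sg$-dimensional variation within a fixed support, and the coherence-weighted block geometry, tightly enough to recover the factor $\log D$ rather than a spurious linear factor in $D$, is the technical heart of the argument.
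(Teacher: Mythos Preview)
Your proposal is correct and follows essentially the same route as the paper: both rewrite $\delta_s$ as $\frac{1}{m}\sup_{A\in\mathcal{A}}\bigl|\|A\xi\|_2^2-\textsf{E}\|A\xi\|_2^2\bigr|$ for the matrix set $\mathcal{A}=\{V(\mathbf{x}):\mathbf{x}\in\Omega\}$, invoke Corollary~\ref{t4.1}, import the bounds $M_F=\sqrt{m}$, $M_{l_2\to l_2}\le\sqrt{s}\,\mu_{\mathcal{S}}$, and Koep et al.'s $\gamma_2$ estimate, and then control $\gamma_1(\mathcal{A},\|\cdot\|_{l_2\to l_\infty})$ via Dudley's integral split at $\lambda=\mu_{\mathcal{S}}$ using the two covering-number regimes from \cite{KOEP2022333}. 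Your observation that $\|A(z)\|_{l_2\to l_\infty}=\|A(z)\|_{l_2\to l_2}=\max_l\|z_l\|_2$ on this particular class is a slightly cleaner way to pass to the operator-norm metric than the paper's generic inequality $\|\cdot\|_{l_2\to l_\infty}\le\|\cdot\|_{l_2\to l_2}$, but it leads to the identical $\gamma_1$ bound $\lesssim s\mu_{\mathcal{S}}\log D\bigl(\log G+g\log(s/\mu_{\mathcal{S}})\bigr)$ and hence the same final condition on $m$.
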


\begin{proof}
To apply Corollary \ref{t4.1}, we first need an equivalent form to (\ref{ko2}), i.e. for $\forall \mathbf{x} \in \Sigma_{\mathcal{S}, s} \backslash\{\mathbf{0}\}$,
$$
\left|\frac{\|\mathbf{B} \boldsymbol{\Psi} \mathbf{x}\|_2^2}{\|\mathbf{x}\|_2^2}-1\right| \leq \delta .
$$

Define the set $\Omega$ as the $s$-group-sparse unit vectors:
$$
\Omega := \left\{ \mathbf{x} \in \mathbb{S}^{D-1} \mid \|\mathbf{x}\|_{\mathcal{S}, 0} \leq s \right\}.
$$

Using this, the group restricted isometry constant of $\mathbf{B}$ can be expressed as

\begin{equation}\label{te8}
\delta_s=\sup _{\mathbf{x} \in \Omega}\left|\|\mathbf{B} \boldsymbol{\Psi} \mathbf{x}\|_2^2-1\right| .
\end{equation}

Next, we reshape the aforementioned expression to align with the format stipulated in Corollary \ref{t4.1}, specifically by reformulating the equation to consider the supremum over a matrix set. To accomplish this, let us revisit the definition of the partial basis expansion matrices $\boldsymbol{\Psi}_l \in \mathbb{C}^{d \times d L}$ where $\boldsymbol{\Psi}=\left(\boldsymbol{\Psi}_1^{\top}, \ldots, \boldsymbol{\Psi}_L^{\top}\right)^{\top}$. 

To reformulate the previous expression into the form required by Theorem 4.1—specifically, to have the supremum taken over a set of matrices—we begin by recalling the definition of the partial basis expansion matrices \( \Psi_{l} \in \mathbb{C}^{d \times dL} \), where \( \Psi = \left( \Psi_{1}^\top, \ldots, \Psi_{L}^\top \right)^\top \).

Using Equation (\ref{koe1}), the $l$-th measurement vector \( y^{l} \in \mathbb{C}^{m} \) (from the overall vector \( y \in \mathbb{C}^{mL} \)) can be expressed as:

$$
\begin{aligned}
\mathbf{y}^l & =\boldsymbol{\Phi}_l \boldsymbol{\Psi}_l \mathbf{x}=\left(\begin{array}{c}
\left\langle\left(\boldsymbol{\Phi}_l\right)_1, \boldsymbol{\Psi}_l \mathbf{x}\right\rangle \\
\vdots \\
\left\langle\left(\boldsymbol{\Phi}_l\right)_m, \boldsymbol{\Psi}_l \mathbf{x}\right\rangle
\end{array}\right) ={\left(\begin{array}{ccc}
\left(\boldsymbol{\Psi}_l \mathbf{x}\right)^{\top} & & \\
& \ddots & \\
& \left(\boldsymbol{\Psi}_l \mathbf{x}\right)^{\top}
\end{array}\right)} \cdot {\left(\begin{array}{c}
\left(\boldsymbol{\Phi}_l\right)_1 \\
\vdots \\
\left(\boldsymbol{\Phi}_l\right)_m
\end{array}\right)},
\end{aligned}
$$
where $\left(\boldsymbol{\Phi}_l\right)_i \in \mathbb{C}^d$ denotes the $i$-th row of the matrix $\boldsymbol{\Phi}_l$, here, \( V_{l}(x) \) is a block-diagonal matrix with \( m \) blocks of \( (\Psi_{l} x)^\top \), and \( \xi^{l} \) is a concatenation of the measurement vectors \( (\Phi_{l})_{i} \) for \( i = 1, \ldots, m \).

Assuming that each block $\boldsymbol{\Phi}_l$ consists of independent copies of a $\varphi$-sub-Gaussian random variable, the combined vector $\boldsymbol{\xi} = \left( \left( \boldsymbol{\xi}^1 \right)^{\top}, \ldots, \left( \boldsymbol{\xi}^L \right)^{\top} \right)^{\top}$ is itself $\varphi$-sub-Gaussian. Define the linear operator $V: \mathbb{C}^{dL} \rightarrow \mathbb{C}^{mL \times mdL}$ by

$$
\mathbf{x} \mapsto V(\mathbf{x}) = \operatorname{diag}\left\{ V_l(\mathbf{x}) \right\}_{l=1}^L,
$$
where each $V_l(\mathbf{x})$ acts on $\mathbf{x}$. With this definition, we have the equality in distribution,
$
\mathbf{B} \boldsymbol{\Psi} \mathbf{x} \stackrel{\mathrm{d}}{=} V(\mathbf{x}) \boldsymbol{\xi},
$, where $\stackrel{\mathrm{d}}{=}$ denotes that both sides share the same distribution. Observing that

$$
\mathbb{E}\left\| \mathbf{B} \boldsymbol{\Psi} \mathbf{x} \right\|_2^2 = \mathbf{x}^* \boldsymbol{\Psi}^* \mathbb{E}\left[ \mathbf{B}^{\top} \mathbf{B} \right] \boldsymbol{\Psi} \mathbf{x} = m \left\| \mathbf{x} \right\|_2^2,
$$

we utilize the facts that the rows of each $\mathbf{B}_l$ are independent random $m$-vectors with independent entries and that $\boldsymbol{\Psi}$ is unitary. Therefore, applying Equation (\ref{te8}), the group restricted isometry property of the matrix $\frac{1}{\sqrt{m}} \mathbf{B} \boldsymbol{\Psi}$ can be formulated as

$$
\begin{aligned}
\delta_s\left(\frac{1}{\sqrt{m}} \mathbf{B} \boldsymbol{\Psi}\right) &= \sup_{\mathbf{x} \in \Omega} \left| \frac{1}{m} \left\| \mathbf{B} \boldsymbol{\Psi} \mathbf{x} \right\|_2^2 - \left\| \mathbf{x} \right\|_2^2 \right| \\
&= \frac{1}{m} \sup_{\mathbf{x} \in \Omega} \left| \left\| \mathbf{B} \boldsymbol{\Psi} \mathbf{x} \right\|_2^2 - m \left\| \mathbf{x} \right\|_2^2 \right| \\
&= \frac{1}{m} \sup_{\mathbf{x} \in \Omega} \left| \left\| \mathbf{B} \boldsymbol{\Psi} \mathbf{x} \right\|_2^2 - \mathbb{E} \left\| \mathbf{B} \boldsymbol{\Psi} \mathbf{x} \right\|_2^2 \right| \\
&\stackrel{\mathrm{d}}{=} \frac{1}{m} \sup_{\mathbf{x} \in \Omega} \left| \left\| V(\mathbf{x}) \boldsymbol{\xi} \right\|_2^2 - \mathbb{E} \left\| V(\mathbf{x}) \boldsymbol{\xi} \right\|_2^2 \right| \\
&= \frac{1}{m} \sup_{\boldsymbol{\Gamma} \in \mathcal{M}} \left| \left\| \boldsymbol{\Gamma} \boldsymbol{\xi} \right\|_2^2 - \mathbb{E} \left\| \boldsymbol{\Gamma} \boldsymbol{\xi} \right\|_2^2 \right|,
\end{aligned}
$$
where $\mathcal{M} = \{ V(\mathbf{x}) : \mathbf{x} \in \Omega \}$ is the set of operators derived from vectors in $\Omega$.

Then by the Corollary \ref{t4.1}, we need to estimate items $\gamma_2\left(\mathcal{M},\|\cdot\|_{l_2 \rightarrow l_2}\right), \gamma_{1}\left(\mathcal{M},\|\cdot\|_{l_2 \rightarrow l_{\infty}}\right)$, $M_F(\mathcal{M})$, and $M_{l_2 \rightarrow l_2}(\mathcal{M})$ separately. From Koep's results \cite{KOEP2022333}, we already know that $M_F(\mathcal{M})=\sqrt{m}$, $M_{l_2 \rightarrow l_{2}}(\mathcal{M})\leq \sqrt{s} \mu_{\mathcal{S}}$, and
$$
\begin{aligned}
\gamma_2\left(\mathcal{M},\|\cdot\|_{2 \rightarrow 2}\right) \lesssim  \sqrt{s} \mu_{\mathcal{S}} \sqrt{\log (D)} \log (s)\left(\sqrt{\log (G)}+\sqrt{g \log \left(s / \mu_{\mathcal{S}}\right)}\right).
\end{aligned}
$$
By virtue of $\Vert\cdot\Vert_{l_{2}\to l_{\infty}}\le \Vert\cdot\vert_{l_{2}\to l_{2}}$, we have
$$
\gamma_{1}\left(\mathcal{M},\|\cdot\|_{l_2 \rightarrow l_{\infty}}\right) \leq \gamma_{1}\left(\mathcal{M},\|\cdot\|_{l_2 \rightarrow l_2}\right).
$$
Then, we have by Dudley's bound
\begin{align}
	\gamma_{1}\left(\mathcal{M},\|\cdot\|_{l_2 \rightarrow l_2}\right)\lesssim \int_{0}^{\lambda}\log \mathcal{N}(\mathcal{M},  \Vert\cdot\Vert_{l_{2}\to l_{2}}, u)\,du+\int_{\lambda}^{\sqrt{s}\mu_{\mathcal{S}}}\log \mathcal{N}(\mathcal{M},  \Vert\cdot\Vert_{l_{2}\to l_{2}}, u)\,du,\nonumber
\end{align}
where $0\le \lambda\le \sqrt{s}\mu_{\mathcal{S}}$ is a parameter will be chosen later.

For the first part, we have (see (14) in \cite{KOEP2022333} for details)
\begin{align}
	\int_{0}^{\lambda}\log \mathcal{N}(\mathcal{M},  \Vert\cdot\Vert_{l_{2}\to l_{2}}, u)\,du\le \lambda s\log(eG/s)+2\lambda sg\log(5e/\lambda).\nonumber
\end{align}
As for the other part, we have (see Proposition 4.2 in \cite{KOEP2022333} for details)
\begin{align}
	\int_{\lambda}^{\sqrt{s}\mu_{\mathcal{S}}}\log \mathcal{N}(\mathcal{M},  \Vert\cdot\Vert_{l_{2}\to l_{2}}, u)\,du\le \sqrt{s}\mu_{\mathcal{S}}^{2}\log D(\frac{\sqrt{s}}{\lambda}-\frac{1}{\mu_{\mathcal{S}}})(\log G+g\log(s/\lambda)).\nonumber
\end{align}
Taking $\lambda=\mu_{\mathcal{S}}$, we have
\begin{align}
	\gamma_{1}\left(\mathcal{M},\|\cdot\|_{l_2 \rightarrow l_2}\right)\lesssim s\mu_{\mathcal{S}}\log D\big(\log G+g\log(s/\mu_{\mathcal{S}})   \big).\nonumber
\end{align}

Then by applying the Corollary \ref{t4.1} with the upper bound of all the quantity, the proof is completed.
\end{proof}

\textbf{Acknowledgment:} The authors are grateful to Hanchao Wang and Yu Xia for their fruitful discussions. 
Chen was partly supported by the National Natural Science Foundation of China  (No. 12071257).


\end{document}